\title{Homogeneous sets, clique-separators, critical graphs, and optimal $\chi$-binding functions}
\author{Christoph Brause, Maximilian Gei\ss er, and Ingo Schiermeyer\\
\small Institute for Discrete Mathematics and Algebra\\[-0.8ex]
\small TU Bergakademie Freiberg\\[-0.8ex] 
\small Germany}
\newcommand{\B}{banner}
\newcommand{\Bu}{bull}
\newcommand{\Di}{diamond}
\newcommand{\G}{gem}
\newcommand{\Ha}{co\textnormal{-}banner}
\newcommand{\Pa}{paw}
\newcommand{\Para}{paraglider}
\newcommand{\Q}[1]{Q[#1]}
\newcommand{\f}{f^\star_{\{3K_1\}}}
\newcommand{\fk}{f^\star_{\{2K_2\}}}
\newcommand{\dist}{{\rm dist}}
\newcommand{\ql}[1]{\vartriangleleft_\chi^{#1}}
\tikzstyle{snode}=[circle ,draw=black,fill=white,thick, inner sep=0pt ,minimum size=1.4mm]
\tikzstyle{bnode}=[circle ,draw=black,fill=black,thick, inner sep=0pt ,minimum size=1.4mm]
\theoremstyle{plain}
\newtheorem{theorem}{Theorem}
\newtheorem{lemma}[theorem]{Lemma}
\newtheorem{observation}[theorem]{Observation}
\theoremstyle{definition}
\theoremstyle{plain}
\newtheorem*{sthm}{The Strong Perfect Graph Theorem}
\newtheorem{cor}[theorem]{Corollary}
\newenvironment{non}[1]{\trivlist \item [\hskip \labelsep {\bf #1}]\ignorespaces\it}{\endtrivlist}
\begin{document}

\maketitle


\begin{abstract}
Given a set $\mathcal{H}$ of graphs, let $f_\mathcal{H}^\star\colon \mathbb{N}_{>0}\to \mathbb{N}_{>0}$ be the optimal $\chi$-binding function of the class of $\mathcal{H}$-free graphs, that is,
$$f_\mathcal{H}^\star(\omega)=\max\{\chi(G): G\text{ is } \mathcal{H}\text{-free, } \omega(G)=\omega\}.$$
In this paper, we combine the two decomposition methods by homogeneous sets and clique-separators in order to determine optimal $\chi$-binding functions for subclasses of 
$P_5$-free graphs and of $(C_5,C_7,\ldots)$-free graphs. In particular, we prove the following for each $\omega\geq 1$:
\begin{enumerate}
\item[(i)] $f_{\{P_5,\B\}}^\star(\omega)=\f(\omega)\in \Theta(\omega^2/\log(\omega)),$
\item[(ii)] $f_{\{P_5,\Ha\}}^\star(\omega)=f^\star_{\{2K_2\}}(\omega)\in\mathcal{O}(\omega^2),$
\item[(iii)] $f_{\{C_5,C_7,\ldots,\B\}}^\star(\omega)=f^\star_{\{C_5,3K_1\}}(\omega)\notin \mathcal{O}(\omega),$ and
\item[(iv)] 
$f_{\{P_5,C_4\}}^\star(\omega)=\lceil(5\omega-1)/4\rceil.$
\end{enumerate}

We also characterise, for each of our considered graph classes, all graphs $G$ with $\chi(G)>\chi(G-u)$ for each $u\in V(G)$. From these structural results, Reed's conjecture -- relating chromatic number, clique number, and maximum degree of a graph -- follows for $(P_5,\B)$-free graphs.
\end{abstract}

\section{Introduction}

The study of $\chi$-binding functions for (hereditary) graph classes is one of the central problems in chromatic graph theory. Motivated by the Strong Perfect Graph Conjecture of Berge~\cite{Be}, Gy\'arf\'as~\cite{G} introduced these upper bounds on the chromatic numbers of graphs that belong to a certain graph class. A lot of results have been published in the last decades in this particular field of graph theory (we refer the reader to the surveys of Schiermeyer and Randerath~\cite{RS}, and Scott and Seymour~\cite{SS}). Our contribution in this paper is a combination of decompositions by homogeneous sets and clique-separators which allows us to determine exemplary optimal $\chi$-binding functions for subclasses of $P_5$-free graphs as well as for subclasses of $(C_5,C_7,\ldots)$-free graphs.

The organisation of this paper is as follows: We continue in this section by motivating and presenting our main results as well as an introduction into notation and terminology. In Section~2, we prove some preliminary results that are used in later proofs while in the remaining sections our main results are proven.  We introduce our main technique in Section~3, deal with $\Ha$-free graphs in Section~4, with $\B$-free graphs in Section~5, and with $C_4$-free graphs in Section~6.

\subsection{Motivation and contribution}\label{sec:mot}

We use standard notation and terminology, and note that each of the considered graphs is simple, finite, and undirected in this paper. 
Some particular graphs are depicted in Fig.\,\ref{fig_forbidden_induced_Subgraphs}, and we denote a path and a cycle on $n$ vertices by $P_n$ and $C_n$, respectively.
Additionally, given graphs $G,H_1,H_2,\ldots $, the graph $G$ is \emph{$(H_1,H_2,\ldots)$-free} if $G-S$ is non-isomorphic to $H$ for each $S\subsetneq V(G)$ and each $H\in\{H_1,H_2,\ldots\}$.

\begin{figure}[h]
\centering
\begin{subfigure}[b]{0.085\textwidth}
\centering
\begin{tikzpicture}
\foreach \x in{1,2,3}
	\draw ({360/3*\x+90}:0.436) node[bnode]{};
\end{tikzpicture}
\subcaption{$3K_1$}
\end{subfigure}
\begin{subfigure}[b]{0.085\textwidth}
\centering
\begin{tikzpicture}
	\draw (0,0) node[bnode]{}--(0,-0.67) node[bnode]{};
	\draw (0.66,0) node[bnode]{}--(0.67,-0.67) node[bnode]{};
\end{tikzpicture}
\subcaption{$2K_2$}
\end{subfigure}
\begin{subfigure}[b]{0.07\textwidth}
\centering
\begin{tikzpicture}
	\draw (0,0) node[bnode]{}--(0,-0.67) node[bnode]{}--(0.66,-0.67) node[bnode]{}--(0.67,0) node[bnode]{}--(0,0);
\end{tikzpicture}
\subcaption{$C_4$}
\end{subfigure}
\begin{subfigure}[b]{0.09\textwidth}
\centering
\begin{tikzpicture}
	\draw (0.33,0.67) node[bnode]{}--(0.67,0) node[bnode]{}--(1,0.67) node[bnode]{}--(1.33,0) node[bnode]{};
	\draw (0.67,0) node[bnode]{}--(1.33,0) node[bnode]{};
\end{tikzpicture}
\subcaption{$\Pa$}
\end{subfigure}
\\
\begin{subfigure}[b]{0.11\textwidth}
\centering
\begin{tikzpicture}
	\draw (0,0) node[bnode]{}--(0.67,0) node[bnode]{}--(1.07,0.4) node[bnode]{}--(1.47,0) node[bnode]{}--(1.07,-0.4) node[bnode]{}--(0.67,0);
\end{tikzpicture}
\subcaption{$\B$}
\end{subfigure}
\begin{subfigure}[b]{0.14\textwidth}
\centering
\begin{tikzpicture}
	\draw (0,0) node[bnode]{}--(0.33,0.67) node[bnode]{}--(0.67,0) node[bnode]{}--(1,0.67) node[bnode]{}--(1.33,0) node[bnode]{};
	\draw (0.67,0) node[bnode]{}--(1.33,0) node[bnode]{};
\end{tikzpicture}
\subcaption{$\Ha$}
\end{subfigure}
\begin{subfigure}[b]{0.11\textwidth}
\centering
\begin{tikzpicture}
	\draw (-0.67,0) node[bnode]{}--(-0.33,0.67)--(0.33,0.67)--(0.67,0)node[bnode]{};
	\draw (0,0) node[bnode]{}--(-0.33,0.67) node[bnode]{};
	\draw (0,0) node[bnode]{}--(0.33,0.67) node[bnode]{};
\end{tikzpicture}
\subcaption{$\Bu$}
\end{subfigure}
\begin{subfigure}[b]{0.07\textwidth}
\centering
\begin{tikzpicture}
\foreach \x in {1,2,3,4,5}
 	\draw ({360/5*\x+90}:0.436) node[bnode]{}--({360/5*(\x+1)+90}:0.436) node[bnode]{};
\end{tikzpicture}
\subcaption{$C_5$}
\end{subfigure}
\begin{subfigure}[b]{0.105\textwidth}
\centering
\begin{tikzpicture}
	\draw (0,0) node[bnode]{}--(-0.67,0) node[bnode]{}--(-0.33,0.67)--(0.33,0.67)--(0.67,0);
	\draw (0,0) node[bnode]{}--(-0.33,0.67) node[bnode]{};
	\draw (0,0) node[bnode]{}--(0.33,0.67) node[bnode]{};
	\draw (0,0) node[bnode]{}--(0.67,0) node[bnode]{};
\end{tikzpicture}
\subcaption{$\G$}
\end{subfigure}
\begin{subfigure}[b]{0.09\textwidth}
\centering
\begin{tikzpicture}
	\draw (0,0) node[bnode]{}--(0.33,0.67) node[bnode]{}--(0.67,0) node[bnode]{}--(1,0.67) node[bnode]{}--(1.33,0) node[bnode]{};
\end{tikzpicture}
\subcaption{$P_5$}
\end{subfigure}
\begin{subfigure}[b]{0.155\textwidth}
\centering
\begin{tikzpicture}
	\foreach \x in {0,0.67}{
		\foreach \y in {-0.33,0.33,1}{
			\draw (\x,0)node[bnode]{}--(\y,-0.67)node[bnode]{};
		}	
	}
	\draw (0.33,-0.67)node[bnode]{}--(1,-0.67)node[bnode]{};
\end{tikzpicture}
\subcaption{$\Para$}
\end{subfigure}
\begin{subfigure}[b]{0.1\textwidth}
\centering
\begin{tikzpicture}
\foreach \x in {1,2,3,4,5}{
 	\draw (0,0)node[bnode]{}--({360/5*\x+90}:0.67) node[bnode]{}--({360/5*(\x+1)+90}:0.67) node[bnode]{};
}
\end{tikzpicture}
\subcaption{$W_5$}
\end{subfigure}\newline
\\
\begin{subfigure}[b]{0.11\textwidth}
\centering
\begin{tikzpicture}[rotate=90]
	\draw (0.3,-0.8) node[bnode]{}--(0.67,0) node[bnode]{}--(1.17,0.3) node[bnode]{}--(1.67,0) node[bnode]{}--(1.17,-0.3) node[bnode]{}--(0.67,0);
	\draw (0.67,0) node[bnode]{}--(1.17,0.8) node[bnode]{}--(1.67,0) node[bnode]{}--(1.17,-0.8) node[bnode]{}--(0.67,0);
	\draw (1.17,0.8) node[bnode]{}--(1.17,-0.8) node[bnode]{};
\end{tikzpicture}
\subcaption{$Q[P_4]$}
\end{subfigure}

\caption{Most frequently used graphs}\label{fig_forbidden_induced_Subgraphs}
\end{figure}

A function $L\colon V(G)\to\mathbb{N}_{>0}$ is a \emph{proper colouring} if $L(u)\neq L(v)$ for each pair of adjacent vertices $u,v\in V(G)$ and, for simplicity, we say that each 
$k\in \{L(u):u\in V(G)\}$ is a \emph{colour}.  The smallest number of colours for which there is a proper colouring of $G$ is the \emph{chromatic number} of $G$, denoted by $\chi(G)$. 
It is obvious that each \emph{clique}, which is a set of pairwise adjacent vertices, needs to be coloured by pairwise different colours in a proper colouring. Thus, the \emph{clique number}, which is the largest cardinality of a clique in $G$ and that is denoted by $\omega(G)$, is a lower bound on $\chi(G)$. Since the beginnings of chromatic graph theory, researchers have been interested in relating these two invariants. For example, Erd\H os~\cite{E} showed that the difference could be arbitrarily large by proving that, for every two integers $g,k\geq 3$, there is a graph $G$ with chromatic number at least $k$ and girth at least $g+1$. Note that $G$ is of \emph{girth} at least $g+1$ if and only if $G$ is $(C_3, C_4,\ldots, C_g)$-free.
In contrast, it attracted Berge~\cite{Be} to study \emph{perfect} graphs, which are graphs, say $G$, that satisfy $\chi(G-S)=\omega(G-S)$ for each $S\subsetneq V(G)$. His research resulted in two famous conjectures, the Weak and the Strong Perfect Graph Conjecture. The first one, proven by Lov\'asz~\cite{L}, states that the complementary graph of a perfect graph is perfect. 
In contrast to the Weak Perfect Graph Conjecture, the Strong Perfect Graph Conjecture was open for a long time but is nowadays confirmed and known as the Strong Perfect Graph Theorem.
\begin{sthm}[Chudnovsky et al.~\cite{CRST}]
A graph $G$ is perfect if and only if $G$ and $\bar{G}$ are $(C_5,C_7,\ldots)$-free. 
\end{sthm}

Before Berge's conjectures were proven, many researchers introduced problems surrounding the world of perfect graphs. 
For example, the concept of $\chi$-binding functions for graph classes relates the chromatic and clique numbers of a graph. It was introduced by Gy\'arf\'as~\cite{G}, and says that, given a class $\mathcal{G}$ of graphs, a function $f\colon \mathbb{N}_{>0}\to\mathbb{N}_{>0}$ is a \emph{$\chi$-binding function} for $\mathcal{G}$ if $\chi(G-S)\leq f(\omega(G-S))$ for each $G\in\mathcal{G}$ and each $S\subsetneq V(G)$.
If such a function exists, then $\mathcal{G}$ is \emph{$\chi$-bounded}.
For a $\chi$-bounded class $\mathcal{G}$, the function $f^\star\colon \mathbb{N}_{>0}\to\mathbb{N}_{>0}$ with
$$\omega\mapsto\max\{\chi(G-S):G\in\mathcal{G},S\subsetneq V(G), \omega(G-S)=\omega\}$$
is the \emph{optimal $\chi$-binding function} of $\mathcal{G}$.
For brevity, given some graphs $H_1,H_2,\ldots$, we let $f^\star_{\{H_1,H_2,\ldots\}}$ denote the optimal $\chi$-binding function of the class of $(H_1,H_2,\ldots)$-free graphs if the latter one is $\chi$-bounded.

Gy\'arf\'as~\cite{G} observed from Erd\H{o}s' \cite{E} result that a $\chi$-binding function does not exist for the class of $(H_1,H_2,\ldots,H_k)$-free graphs whenever each of the given graphs $H_1,H_2,\ldots,H_k$ with $k\in\mathbb{N}_{>0}$ contains a cycle. In other words, for the $\chi$-boundedness of the class of $(H_1,H_2,\ldots,H_k)$-free graphs it is necessary that at least one of $H_1,H_2,\ldots,H_k$ is a forest. Furthermore, Gy\'arf\'as~\cite{G} and, independently, Sumner~\cite{S} conjectured that there is such an upper bound on the chromatic numbers of $H$-free graphs whenever $H$ is a forest.
For example, the class of $P_t$-free graphs for $t\geq 5$ has a $\chi$-binding function~(cf.\,\cite{G}) although all known upper bounds on $f^\star_{\{P_5\}}(\omega)$ and $f^\star_{\{P_5,C_5\}}$ are non-polynomial in~$\omega$~(cf.\,\cite{SSS,CSiv}). To the best of our knowledge, it is also unknown whether there is a polynomial $\chi$-binding function for the class of $(C_5,C_7,\ldots)$-free graphs (which is a short notation for the class of graphs each of which is $C_{2k+5}$-free for each $k\in\mathbb{N}_0$) although a double exponential one exists (cf.\,\cite{SSodd}).

It is rather interesting that $P_4$-free graphs are perfect by the Strong Perfect Graph Theorem but already for small superclasses such as $P_5$-free graphs and $(C_5,C_7,\ldots)$-free graphs, the best known $\chi$-binding functions are non-polynomial. Although it is unknown whether $f_{\{P_5\}}^\star$ and $f^\star_{\{C_5,C_7,\ldots\}}$ are bounded by a polynomial or not, there is a big difference in the order of magnitude between $f^\star_{\{P_4\}}$ on one hand, and $f_{\{P_5\}}^\star$ and $f^\star_{\{C_5,C_7,\ldots\}}$ on the other hand. In particular, modifying a result of \cite{BRSV}, we obtain the following lemma which we prove in Section~2 and from which we deduce that the classes of $P_5$-free graphs and of $(C_5,C_7,\ldots)$-free graphs do not have a linear $\chi$-binding function:
\begin{lemma}\label{lemma_main_non-linear}
Let $\mathcal{H}$ be a set of graphs and $h$ be an integer such that $\bar{H}$ has girth at most $h$ for each graph $H\in \mathcal{H}$.
If the class of $\mathcal{H}$-free graphs is $\chi$-bounded, then $f_\mathcal{H}^\star$ cannot be bounded from above by a linear function.
%
\end{lemma}

Since the orders of magnitude of $f_{\{P_5\}}^\star$ and $f^\star_{\{C_5,C_7,\ldots\}}$ are unknown, it is of interest to study subclasses of $P_5$-free graphs and subclasses of $(C_5,C_7,\ldots)$-free graphs.
For example, it has been proven
\begin{itemize}
\item $f^\star_{\{P_5,\Pa\}}(\omega)=
\left.\begin{cases}
f^\star_{\{P_5,C_3\}}(\omega)&\text{if } \omega\leq 2,\\
\omega &\text{if } \omega> 2\\
\end{cases}\right\}
=
\left.
\begin{cases}
3&\text{if } \omega= 2,\\
\omega &\text{if } \omega\neq 2\\
\end{cases}\right\}$ (cf.\,\cite{O,R} or \cite{RS}),
\item $f^\star_{\{P_5,\Di\}}(\omega)\leq \omega+1$ (cf.\,\cite{R}), 
\item $f^\star_{\{P_5,C_4\}}(\omega),f^\star_{\{P_5,\G\}}(\omega)\leq \lceil 5\omega/4\rceil$ (cf.\,\cite{CKS,CKMM}), 
\item $f^\star_{\{P_5,\Para\}}(\omega) \leq \lceil 3\omega/2\rceil$ (cf.\,\cite{HK}), and
\item $f^\star_{\{C_5,C_7,\ldots,\Bu\}}(\omega),f^\star_{\{P_5,\Bu\}}(\omega) \leq {\omega+1\choose 2}$ (cf.\,\cite{CSiv}).
\end{itemize}
We note that Randerath~\cite{R} characterised all non-bipartite $(P_5,C_3)$-free graphs and refer the reader to the survey of Schiermeyer and Randerath~\cite{RS} for additional results.

Our main contribution in this paper is an approach which allows us to determine optimal $\chi$-binding functions. 

\begin{theorem}\label{thm_main}
If $\omega\in\mathbb{N}_{>0}$, then
\begin{enumerate}[\rm (i)]
\item  $f_{\{P_5,\B\}}^\star(\omega)=\f(\omega),$ 
\item  $f_{\{P_5,\Ha\}}^\star(\omega)=f^\star_{\{2K_2\}}(\omega).$
\item  $f_{\{C_5,C_7,\ldots,\B\}}^\star(\omega)=f^\star_{\{C_5,3K_1\}}(\omega),$ and
\item
$f_{\{P_5,C_4\}}^\star(\omega)=
\left\lceil\frac{5\omega-1}{4}\right\rceil.
$
\end{enumerate}
\end{theorem}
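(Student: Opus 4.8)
The plan is to prove each identity by establishing matching lower and upper bounds, exploiting the reduction sketched above. For the reduction-type identities (i)--(iii) the lower bound is the routine direction and follows from class inclusions obtained by checking that the relevant base graph occurs as an induced subgraph of \emph{every} forbidden graph defining the larger class. Concretely, an induced $3K_1$ sits inside $P_5$, inside $\B$, and inside every long odd hole $C_{2k+5}$, so each $3K_1$-free graph is $(P_5,\B)$-free and $(C_5,C_7,\ldots,\B)$-free; likewise an induced $2K_2$ sits inside both $P_5$ and $\Ha$, so each $2K_2$-free graph is $(P_5,\Ha)$-free. Since the optimal $\chi$-binding function is monotone under passing to subclasses, and since every $(C_5,3K_1)$-free graph is automatically $(C_7,C_9,\ldots)$-free, these inclusions yield $f_{\{P_5,\B\}}^\star\ge\f$, $f_{\{P_5,\Ha\}}^\star\ge\fk$, and $f_{\{C_5,C_7,\ldots,\B\}}^\star\ge f^\star_{\{C_5,3K_1\}}$.

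The upper bounds carry the real content, and here I would invoke the machinery of Section~3, which via the refined notions of criticality, clique-separators, and homogeneous-set contraction reduces the determination of $f^\star_\mathcal{H}$ to controlling the weighted chromatic behaviour (set-mappings) of graphs $R$ that are critical and contain neither a clique-separator nor a homogeneous set. The decisive claim I would then prove is a purely structural one: every such reduced $R$ already lies in the corresponding base class --- a reduced $(P_5,\B)$-free graph is $3K_1$-free, a reduced $(P_5,\Ha)$-free graph is $2K_2$-free, and a reduced $(C_5,C_7,\ldots,\B)$-free graph is $(C_5,3K_1)$-free. For the classes built on $3K_1$ (hence (i) and (iii)) this closes the argument cleanly, since clique-substitution preserves both $3K_1$-freeness and $C_5$-freeness, so the set-mapping bound on $R$ is governed by the same optimal function; for the $2K_2$-based class (ii) the same structural reduction applies, but because clique-substitution does \emph{not} preserve $2K_2$-freeness the weighted bound requires a dedicated argument lifting the base-class estimate to the set-mapping setting.

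I expect the structural characterisation to be the main obstacle, and I would attack it by contradiction through a neighbourhood analysis. Assuming a reduced $(P_5,\B)$-free graph $G$ contains an induced $3K_1$ on $\{a,b,c\}$, I would study the common and private neighbourhoods of $a,b,c$: the absence of an induced $P_5$ forces strong adjacency (domination) patterns among these neighbourhoods, while the absence of $\B$ rules out the $C_4$-with-pendant configurations that such patterns would otherwise create. The aim is to show that every admissible configuration either exhibits a forbidden induced subgraph, or produces a nontrivial homogeneous set or a clique-separator, each contradicting that $G$ is reduced. The $(C_5,C_7,\ldots,\B)$ case runs along the same lines, replacing $P_5$-freeness by the exclusion of all long odd holes, and the $(P_5,\Ha)$ case is handled by the analogous argument organised around $2K_2$.

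Part~(iv) has a different flavour, since the target is an explicit function. Reducing as before, a reduced $(P_5,C_4)$-free graph contains no homogeneous set, and I would prove the structural dichotomy that such a graph is either perfect or, after contracting the maximal homogeneous sets of the original graph, a weighted $C_5$. In the perfect case $\chi=\omega\le\lceil(5\omega-1)/4\rceil$, while in the second case I would bound the weighted chromatic number of $C_5$. Writing $w_1,\dots,w_5$ for the weights and $\omega=\max_i(w_i+w_{i+1})$, the odd-cycle formula gives $\chi_w(C_5)=\max\bigl(\omega,\ \lceil(w_1+\cdots+w_5)/2\rceil\bigr)$, and the observation producing the $-1$ is a parity one: when $\omega$ is odd the five consecutive sums $w_i+w_{i+1}$ cannot all equal $\omega$ (their total $2\sum_i w_i$ would be simultaneously odd and even), whence $\sum_i w_i\le(5\omega-1)/2$ and $\chi_w(C_5)\le\lceil(5\omega-1)/4\rceil$. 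For the matching lower bound I would exhibit, for each residue of $\omega$ modulo $4$, an unbalanced clique-substitution of $C_5$ with part sizes tuned so that $\omega(G)=\omega$ and $\chi(G)=\lceil(5\omega-1)/4\rceil$; verifying that these graphs are $(P_5,C_4)$-free --- clique-substitution preserves both properties, since $P_5$ is prime and $C_5$ is $C_4$-free --- and computing their chromatic numbers is the delicate remaining step.
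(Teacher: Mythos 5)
Your overall architecture (lower bounds from class inclusions, upper bounds by reducing to critical, prime graphs without clique-separators) matches the paper's, the lower-bound direction is correct in all four parts, and your treatment of (iv) -- the weighted-$C_5$ formula together with the parity observation that for odd $\omega$ the five consecutive sums $w_i+w_{i+1}$ cannot all equal $\omega$ -- is sound and in fact slicker than the paper's case computation. But the decisive structural claims are asserted, not proved, and this is where essentially all of the content lies. For (i) and (iii) you propose to show by a ``neighbourhood analysis'' that a reduced $(P_5,\B)$-free (resp.\ $(C_5,C_7,\ldots,\B)$-free) graph containing an induced $3K_1$ yields a contradiction. What is actually true, and what the paper uses, is that a \emph{prime} graph in these classes with independence number at least $3$ is \emph{perfect}; these are the theorems of Ho\`ang and of Karthick--Maffray--Pastor, each a substantial published structural result, and only criticality then upgrades ``perfect'' to ``complete'' and hence to ``$3K_1$-free''. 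Your sketch (``$P_5$-freeness forces domination patterns, $\B$-freeness rules out $C_4$-with-pendant configurations'') amounts to a plan to reprove these theorems from scratch and gives no evidence the case analysis closes; moreover, stating the target as ``reduced implies $3K_1$-free'' conflates the two cases of the genuine dichotomy. The same issue recurs in (iv), where identifying the non-perfect prime reduced graph as $C_5$ again goes through the Karthick--Maffray--Pastor theorem plus Randerath's characterisation of non-bipartite $(P_5,C_3)$-free graphs, neither of which you supply.

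Part (ii) has a second, independent gap. The homogeneous-set contraction machinery you invoke uniformly does not apply here: the paper's reduction lemma is for $\Q{P_4}$-free graphs, and $(P_5,\Ha)$-free graphs need not be $\Q{P_4}$-free (the graph $\Q{P_4}$ itself is such an example). You correctly observe that clique-substitution does not preserve $2K_2$-freeness, so even granting the reduction, the weighted bound on the reduced graph would not be controlled by $f^\star_{\{2K_2\}}$; but the ``dedicated argument'' you defer to is precisely the missing proof. The paper avoids the weighted setting entirely for (ii): it proves directly that every \emph{critical} $(P_5,\Ha)$-free graph is $2K_2$-free, via a careful analysis showing that an edge $u_1u_2$ with an edge outside $N[\{u_1,u_2\}]$ forces a clique-separator of modules (contradicting criticality); combined with the monotonicity of $f^\star_{\{2K_2\}}$ this finishes (ii) with no set-mappings at all. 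That argument, which occupies all of the paper's Section~4, is absent from your proposal.
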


It is worth pointing out that there are just a few
graph classes for which optimal $\chi$-binding functions are known. As the examples described above suggest, mostly one can only determine a $\chi$-binding function, and it is often a tough and challenging problem to determine the optimal one or at least its order of magnitude.

Note that Lemma~\ref{lemma_main_non-linear} implies that $f_{\{C_5,3K_1\}}^\star,f_{\{3K_1\}}^\star,$ and $f_{\{2K_2\}}^\star$ cannot be bounded from above by a linear function. In particular, by results of Kim~\cite{Kim} and Wagon~\cite{W},
$$f_{\{3K_1\}}^\star(\omega)\in\Theta\left(\frac{w^2}{\log(w)}\right)\quad\text{and}\quad f_{\{2K_2\}}^\star(w)\leq {w+1\choose 2}\in\mathcal{O}(\omega^2),$$
respectively.
We note that, by using a result of Gaspers and Huang~\cite{GH} and a very nice inductive proof, one can subtract~$2$ from the upper bound on $\fk$ for $\omega\geq 3$.

On our way to optimal $\chi$-binding functions, we characterise in parallel \emph{critical} graphs, i.e, graphs $G$ with $\chi(G-u)<\chi(G)$ for each $u\in V(G)$. For this purpose, a \emph{strong expansion} of a graph $G'$ is a graph $G$ for which there are a partition of $V(G)$ into non-empty cliques $S_1,S_2,\ldots,S_{|V(G')|}$ and a bijective function $f\colon \{S_1,S_2,\ldots,S_{|V(G')|}\}\to V(G')$ such that each vertex of $S_i$ is adjacent to each vertex of $S_j$ if $f(S_i)$ is adjacent to $f(S_j)$ and each vertex of $S_i$ is non-adjacent to each vertex of $S_j$ if $f(S_i)$ is non-adjacent to $f(S_j)$ for each distinct~$i,j$.

\begin{theorem}\label{thm_critical_graphs}
Let $G$ be a critical graph.
\begin{enumerate}[\rm (i)]
\item If $G$ is $(P_5,\B)$-free, then $G$ is $3K_1$-free.
\item If $G$ is $(P_5,\Ha)$-free, then $G$ is $2K_2$-free.
\item If $G$ is $(C_5,C_7,\ldots,\B)$-free, then $G$ is $(C_5,3K_1)$-free.
\item If $G$ is $(P_5,C_4)$-free, then $G$ is complete or a strong expansion of a graph $G'$ with $G'\in\{C_5,W_5\}$.
\end{enumerate}
\end{theorem}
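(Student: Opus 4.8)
The plan is to prove all four parts by one mechanism. A critical graph is connected, and by the contrapositive of Lemma~\ref{preliminary_lemma_cutsets} it contains no clique-separator; so it suffices to show that each forbidden configuration would force a clique-separator (parts (i)--(iii)), and that excluding $C_4$ pins the global shape down to a blow-up (part (iv)). First I would record the standing facts used throughout: (a) if $G$ is critical then $G$ is connected and $G-S$ is connected for every clique $S\subseteq V(G)$; and (b) the structural description of $P_5$-free graphs from Section~2, namely that every connected $P_5$-free graph has a dominating clique or a dominating induced $C_5$ (for part (iii) I would substitute the analogous description of odd-hole-free graphs). With these in hand the analysis becomes local around the forbidden pattern.

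The engine I would isolate is a \emph{neighbourhood-comparison lemma}. Given two non-adjacent vertices $a,b$ of a critical $(P_5,\B)$-free graph, consider the common-neighbour structure. The key point is that an induced $C_4$ on $\{a,p,b,q\}$ (two non-adjacent common neighbours $p,q$ of the non-adjacent pair $a,b$) together with a vertex adjacent to exactly one of $p,q$ is precisely an induced $\B$; hence $\B$-freeness forces every other vertex to attach to such a $C_4$ in a controlled (``non-pendant'') way, which makes the common neighbourhoods nested and ultimately makes $N(a)\cap N(b)$ a clique. The remaining task is to upgrade this clique to a genuine separator, i.e.\ to rule out induced paths $a$--$x$--$y$--$b$ of length three that bypass it; here $P_5$-freeness (no induced path on five vertices) bounds the length of induced $a$--$b$ paths, and $\B$-freeness is used once more to merge or forbid the length-three bypasses. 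Feeding a third independent vertex $c$ into this comparison is exactly what excludes $3K_1$ in part~(i), and the identical neighbourhood comparison, run inside the odd-hole-free structure and around an induced $C_5$, excludes both $C_5$ and $3K_1$ in part~(iii).

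For part~(ii) I would run the same scheme for the configuration relevant to $2K_2$: an induced $2K_2$ is the complement of an induced $C_4$, so $\Ha$-freeness (the complement of $\B$-freeness) controls the attachment of further vertices to a $2K_2$ exactly as $\B$-freeness controls attachment to a $C_4$ above, and $P_5$-freeness again bounds the connecting paths; one then produces a clique separating the two edges of the $2K_2$, contradicting Lemma~\ref{preliminary_lemma_cutsets}. I would phrase this directly rather than quoting part~(i) through complementation, since $\overline{P_5}=P_5$ makes the \emph{class} self-dual but the property ``critical'' is not preserved by complementation. For part~(iv) I would start from the dominating object of the $P_5$-structure theorem: a dominating clique quickly yields a complete graph or a small exceptional case, while a dominating induced $C_5=v_1v_2v_3v_4v_5$ feeds straight into the conclusion. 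Here $C_4$-freeness makes every outside vertex attach ``interval-like'' to the $C_5$ (a vertex adjacent to $v_i$ and $v_{i+2}$ but not to $v_{i+1}$ creates an induced $C_4$), so the vertices sharing a common trace group into the five cliques $S_1,\dots,S_5$ of a blow-up, with any vertex adjacent to all five classes playing the hub of $W_5$.

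The main obstacle in every part is the local step that converts ``incomparable traces'' into a forbidden $P_5$, $\B$, or $C_4$ and, dually, ``comparable traces'' into an \emph{honest} clique-separator: one must verify that the separating clique genuinely disconnects $G$ (not merely that it is a vertex cut in some subgraph) and that domination has accounted for every vertex, which is where facts (a) and (b) are used crucially. For part~(iv) there is the additional bookkeeping of proving that each $S_i$ is complete to the correct neighbouring classes and anticomplete to the rest, so that the graph is \emph{exactly} a ``non-empty, $2K_1$-free''-expansion and not a larger graph, together with separating the $C_5$-case from the $W_5$-case according to the existence of a class adjacent to all others. I expect the comparison lemma to be the conceptual heart and these expansion verifications to be routine but lengthy case checks once it is in place.
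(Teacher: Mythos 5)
There is a genuine gap, and it sits at the heart of your plan. Your engine for parts (i) and (iii) is a ``neighbourhood-comparison lemma'' asserting that $\B$-freeness makes $N(a)\cap N(b)$ a clique for non-adjacent $a,b$ and that the forbidden configuration ($3K_1$, resp.\ $C_5$ or $3K_1$) then forces a genuine clique-separator. The only consequences of criticality you invoke are connectivity and the absence of clique-cutsets, and that is not enough: $K_{3,3}$ is connected, $(P_5,\B)$-free, has no clique-cutset, contains $3K_1$, and for two vertices $a,b$ in the same part $N(a)\cap N(b)$ is an independent set of size $3$ (every outside vertex attaches to the $C_4$ on $\{a,p,b,q\}$ at two opposite vertices, so no $\B$ arises and nothing forces the common neighbourhood to be a clique). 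So the comparison lemma is false, and no argument using only ``connected + no clique-cutset'' can exclude $3K_1$. The paper's proof of (i) and (iii) instead decomposes a critical graph into a join of `non-empty, $2K_1$-free'-expansions of \emph{prime} graphs without clique-separators \emph{of modules} (Corollary~\ref{preliminary_cor_critrical}, which rests on the $\Q{P_4}$-machinery of Lemma~\ref{preliminary_lemma_Q_P4}), and then imports the substantial structure theorems of Ho\`ang and of Karthick--Maffray--Pastor (Theorems~\ref{banner_C_5} and~\ref{banner_P_5}: prime $\B$-free graphs with $\alpha\geq 3$ are perfect); a critical perfect join-factor must be complete, whence $3K_1$-freeness. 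Your sketch neither cites these theorems nor offers a substitute of comparable strength, and for part (iii) the ``analogous description of odd-hole-free graphs'' you propose to substitute for the Bacs\'o--Tuza dominating-clique-or-$C_5$ theorem does not exist (odd-hole-free graphs include all bipartite and all chordal graphs, which admit no such dominating structure).

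Two further points. First, the separators that actually arise are clique-separators \emph{of modules}, not cliques: in the paper's proof of (ii), the set $X_{u_1,u_2}\cap X_{w_1,w_2}$ separating the two edges of a $2K_2$ need not be a clique (two of its vertices can be non-adjacent without creating a $P_5$ or $\Ha$), and one must verify it is a union of pairwise completely joined modules before Lemma~\ref{preliminary_lemma_cutsets} applies; your plan to exhibit an ``honest'' clique-separator will not close without this refinement, although your overall direction for (ii) is the closest of the four to the paper's argument. Second, for part (iv) the conclusion is precisely a statement about homogeneous cliques, and the paper reaches it through the same module decomposition together with Theorem~\ref{banner_P_5} and Randerath's characterisation of non-bipartite $(P_5,C_3)$-free graphs applied to the complement of the prime factor; your dominating-$C_5$ route is plausible in spirit but, like the rest of the proposal, avoids the homogeneous-set machinery that the paper's entire method (and the theorem's own statement) is built on.
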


An interesting open conjecture by Reed~\cite{Re} is that $\chi(G)$ can be bounded from above by $\lceil(\Delta(G)+\omega(G)+1)/2\rceil$, where $\Delta(G)$ denotes the \emph{maximum degree} of $G$, i.e.\,the largest number of vertices that have a common adjacent vertex. For example, this conjecture is proven for 
\begin{itemize}
\item $(C_5,C_7,\ldots)$-free graphs~\cite{AKS},
\item $3K_1$-free graphs~\cite{K,KS},
\end{itemize}
and, to the best of our knowledge, it is open for $2K_2$-free graphs.
 By Theorem~\ref{thm_critical_graphs} and the above results, we obtain the following corollary:

\begin{cor}\label{cor_reed}
If $G$ is $(P_5,\B)$-free, then 
$$\chi(G)\leq \left\lceil\frac{\Delta(G)+\omega(G)+1}{2}\right\rceil.$$
\end{cor}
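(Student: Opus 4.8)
The plan is to deduce Corollary~\ref{cor_reed} from Theorem~\ref{thm_critical_graphs}(i) together with the fact that Reed's conjecture is already known for $3K_1$-free graphs~\cite{K,KS}, by reducing an arbitrary $(P_5,\B)$-free graph to a critical one. First I would handle the reduction to critical graphs. Given a $(P_5,\B)$-free graph $G$, the class of $(P_5,\B)$-free graphs is hereditary, so every induced subgraph of $G$ is again $(P_5,\B)$-free. I would pass to an induced subgraph $G^\star$ of $G$ that is \emph{critical} and satisfies $\chi(G^\star)=\chi(G)$; such a subgraph exists because one can repeatedly delete a vertex $u$ with $\chi(G-u)=\chi(G)$ until no such vertex remains, and the resulting graph is critical by definition while its chromatic number has not dropped below $\chi(G)$.

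Next I would apply the structural input. Since $G^\star$ is a critical $(P_5,\B)$-free graph, Theorem~\ref{thm_critical_graphs}(i) tells us that $G^\star$ is $3K_1$-free. Now Reed's conjecture is known to hold for $3K_1$-free graphs~\cite{K,KS}, so
$$\chi(G^\star)\leq \left\lceil\frac{\Delta(G^\star)+\omega(G^\star)+1}{2}\right\rceil.$$
It remains to transfer this bound from $G^\star$ back to $G$. Since $G^\star$ is an induced subgraph of $G$, we have $\omega(G^\star)\leq\omega(G)$ and $\Delta(G^\star)\leq\Delta(G)$ (deleting vertices cannot increase either the clique number or the maximum degree), and the right-hand side $\lceil(\Delta+\omega+1)/2\rceil$ is monotone non-decreasing in both $\Delta$ and $\omega$. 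Combining these inequalities with $\chi(G)=\chi(G^\star)$ yields
$$\chi(G)=\chi(G^\star)\leq\left\lceil\frac{\Delta(G^\star)+\omega(G^\star)+1}{2}\right\rceil\leq\left\lceil\frac{\Delta(G)+\omega(G)+1}{2}\right\rceil,$$
which is exactly the claimed bound.

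In truth, this corollary is essentially a bookkeeping argument once Theorem~\ref{thm_critical_graphs}(i) is in hand, so there is no genuine hard step; the substantive content lives entirely in the characterisation of critical graphs and in the cited resolution of Reed's conjecture for $3K_1$-free graphs. The only point requiring a small amount of care is the monotonicity transfer from $G^\star$ to $G$: one must confirm that passing to an induced subgraph does not increase $\Delta$ or $\omega$, and that the ceiling expression is monotone, both of which are immediate. If anything were to be the main obstacle it would not appear here but upstream, in establishing that every critical $(P_5,\B)$-free graph is $3K_1$-free, which is precisely the statement we are permitted to assume.
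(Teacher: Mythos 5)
Your proof is correct and takes essentially the same route as the paper: the authors give no explicit argument, stating only that the corollary follows from Theorem~\ref{thm_critical_graphs}(i) and the known validity of Reed's conjecture for $3K_1$-free graphs, and your reduction to a critical induced subgraph together with the monotonicity of $\lceil(\Delta+\omega+1)/2\rceil$ is precisely the bookkeeping they leave implicit.
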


\subsection{Notation and terminology}

In this section, we briefly introduce notation and terminology we use in this paper. 

Recall that we consider finite, simple, and undirected graphs only. For notation and terminology not defined herein, we refer to~\cite{BM}. A \emph{graph} $G$ consists of vertex set $V(G)$ and edge set $E(G)$, where each edge $e\in E(G)$ is a size two subset of $V(G)$. For notational simplicity, we write $uv$ instead of $\{u,v\}$ to denote an edge of $G$. The \emph{complementary graph} of $G$, denoted by $\bar{G}$, has vertex set $V(G)$ and edge set $\{uv:u,v\in V(G),u\neq v, uv\notin E(G)\}$.
A \emph{copy} of $G$ is a graph that is isomorphic to $G$.
Additionally, given two vertices $u,v\in V(G)$ and a set $S\subseteq V(G)$, we let $N_G(u)$ denote the set of neighbours of $u$, $N_G[u]=N_G(u)\cup\{u\}$, $N_G(S)$ be the set of all vertices of $V(G)\setminus S$ that have a neighbour in $S$, $N_G[S]=N_G(S)\cup S$, and $\dist_{G}(u,v)$ be the distance of $u$ and $v$ in $G$.
We also let $N^i_G(S)=\{u:\min\{\dist_G(u,s):s\in S\}=i\}$ for $i\geq 2$.
Observe that $\Delta(G)=\max\{|N_G(u)|:u\in V(G)\}$ is the \emph{maximum degree} of $G$.
Furthermore, a graph $H$ with $V(H)=V(G)$ and $E(H)\subseteq E(G)$ is a \emph{spanning subgraph} of $G$.

As usual, $\mathbb{N}_0$ and $\mathbb{N}_{>0}$ are the sets of non-negative and positive integers, respectively. For some integer $k\in\mathbb{N}_{>0}$ and some set $S$, the set $\{1,2,\ldots,k\}$ is denoted by $[k]$ and the power set of $S$ is denoted by $2^S$. Additionally, a function $f\colon\mathbb{N}_{>0}\to\mathbb{N}_{>0}$ is \emph{superadditive} if $f(s_1)+f(s_2)\leq f(s_1+s_2)$ for each $s_1,s_2\in\mathbb{N}_{>0}$.

Let $G$ be a graph and $q\colon V(G)\to\mathbb{N}_0$ be a function with $q(v)\geq 1$ for some vertex $v\in V(G)$, which we also call \emph{vertex-weight function}. 
Given a set $S$ of vertices of $G$, $G[S]$ is the graph with vertex set $S$ and edge set $E(G)\cap \{s_1s_2:s_1,s_2\in S\}$.
We say that $G[S]$ is the graph \emph{induced} by $S$ and $S$ \emph{induces} $G[S]$ in $G$. 
A similar notation is that of $G[q]$, which denotes the graph $G[\{u:u\in V(G),q(u)\geq 1\}]$.
Given an additional graph $H$, we say that $H$ is an \emph{induced subgraph} of $G$ if there is some set $S_H\subseteq V(G)$ of vertices such that $G[S_H]=H$.
Assuming $H$ to be an induced subgraph of $G$, we further define
$$q(S)=\sum_{s\in S}q(s)\quad\text{and}\quad q(H)=q(V(H)).$$
For simplicity in notation and terminology, we say that $q$ instead of the restriction of $q$ to $V(H)$ is a vertex-weight function of $H$.  

Given two graphs $G_1,G_2$ and an integer $k\geq 1$, we denote by $G_1\cup G_2$ the \emph{union} of $G_1$ and $G_2$, that is, $G_1\cup G_2$ has vertex set $V(G_1)\cup V(G_2)$ and edge set $E(G_1)\cup E(G_2)$, and by $kG_1$ a graph consisting of $k$ pairwise vertex disjoint copies of $G_1$.

In this paper, we mainly work with forbidden induced subgraphs. Thus, given two graphs $G,H$ and a family $\mathcal{H}$ of graphs, we say that $G$ is \emph{$H$-free} if no induced subgraph of $G$ is isomorphic to $H$, and that $G$ is \emph{$\mathcal{H}$-free} if $G$ is $H'$-free for each $H'\in\mathcal{H}$. Recall that \emph{$(H_1,H_2,\ldots)$-free} means $\mathcal{H}$-free with $\mathcal{H}=\{H_1,H_2,\ldots\}$.

Let again $G$ be a graph and $q\colon V(G)\to\mathbb{N}_0$ be a vertex-weight function.
Recall that a \emph{clique} of $G$ is a set of vertices which are pairwise adjacent. The \emph{$q$-clique number} of $G$, denoted by $\omega_q(G)$, is the largest integer $k$ for which there is a clique $S$ of $G$ with $q(S)=k$. An independent set $S$ of $G$ is a set of vertices which is a clique in $\bar{G}$, that is, the vertices of $S$ are pairwise non-adjacent in $G$. 
The \emph{$q$-independence number} of $G$, denoted by $\alpha_q(G)$, equals $\omega_q(\bar{G})$. 
A \emph{$q$-colouring} $L\colon V(G)\to 2^{\mathbb{N}_{>0}}$ is a set-mapping for which $|L(u)|=q(u)$ for each $u\in V(G)$. 
We note that the integers of $L(u)$ are also called \emph{colours} of $u$ for $u\in V(G)$, and we say that $L$ \emph{colours} the vertices of $G$.
In view of a simple notation, we let
$$L(S)=\bigcup_{s\in S}L(s)\quad\text{and}\quad L(H)=L(V(H))$$
for each set $S\subseteq V(G)$ and each induced graph $H$ of $G$. 
The $q$-colouring $L$ is \emph{proper} if each two adjacent vertices of $G$ receive disjoint sets of integers.
Moreover, $G$ is \emph{$k$-colourable} (with respect to $q$) for some integer $k\in\mathbb{N}_{>0}$ if there is some proper $q$-colouring $L\colon V(G)\to 2^{[k]}$. 
The smallest integer $k$ for which $G$ is $k$-colourable (with respect to $q$) is the \emph{$q$-chromatic number} of $G$, denoted by $\chi_q(G)$.
For the vertex-weight function $q\colon V(G)\to[1]$, we use the classical terminology of \emph{clique number}, \emph{independence number}, and \emph{chromatic number} instead of $q$-clique number, $q$-independence number, and $q$-chromatic number, and denote these graph invariants by $\omega(G)$, $\alpha(G)$, and $\chi(G)$, respectively. 
Furthermore, recall that $G$ is \emph{perfect} if $\omega(G')=\chi(G')$ for each induced subgraph $G'$ of $G$.

Given a class $\cal G$ of graphs, we recall that a function $f\colon \mathbb{N}_{>0}\to\mathbb{N}_{>0}$ is a \emph{$\chi$-binding function} if $\chi(G')\leq f(\omega(G'))$ for each graph $G\in\mathcal{G}$ and each induced subgraph $G'$ of $G$. 
If such a function exists, then $\mathcal{G}$ is \emph{$\chi$-bounded}. 
Since we are interested in $\chi$-bounded graph classes defined by a set, say $\mathcal{H}$, of forbidden induced subgraphs, we let $f_\mathcal{H}^\star$ denote the \emph{optimal} $\chi$-binding function of the class of $\mathcal{H}$-free graphs, that is, $f_\mathcal{H}^\star\colon \mathbb{N}_{>0}\to\mathbb{N}_{>0}$ is defined by
$$\omega\mapsto \max\{\chi(G):G\text{ is } \mathcal{H}\text{-free, }\omega(G)=\omega\}.$$

Let again $G$ be a graph. For two disjoint sets $A$ and $B$ of vertices, we let $E_G[A,B]$ denote the set of all edges between $A$ and $B$ in $G$. 
If $|E_G[A,B]|=|A|\cdot |B|$, then we say that $A$ is \emph{complete} to $B$ in $G$. If $|E_G[A,B]|=0$, then we say that $A$ is \emph{anti-complete} to $B$ in $G$.
Furthermore, in view of simplicity, we write $a$ is complete/anti-complete to $B$ instead of $\{a\}$ is complete/anti-complete to $B$.
A non-empty set $M$ of vertices of $G$ is a \emph{module} if $M$ is complete to $N_G(M)$ in $G$. We note that a module $M$ is a \emph{homogeneous set} if $1<|M|<|V(G)|$. The graph $G$ is \emph{prime} if there is no homogeneous set in $G$.
Let $k\geq 1$ be an integer, $G_1,G_2$ be two not necessarily connected induced subgraphs of $G$ with $G=G_1\cup G_2$ and $V(G_1)\setminus V(G_2),V(G_2)\setminus V(G_1)\neq\emptyset$, and $X_1$, $X_2$, \ldots, $X_k$ be $k$ pairwise vertex disjoint modules in $G$. If
\begin{itemize}
\item $X_i$ is complete to $X_j$ in $G$ for each distinct $i,j\in[k]$ and 
\item $V(G_1)\cap V(G_2)=X_1\cup X_2\cup \ldots \cup X_k$,
\end{itemize}
then $X_1\cup X_2\cup\ldots \cup X_k$ is a \emph{clique-separator of modules} in $G$.

Let $q,q'\colon V(G)\to\mathbb{N}_0$ be two vertex-weight functions of a graph $G$. We write $q'\vartriangleleft_\chi^G q$ if
$\chi_{q'}(G)=\chi_{q}(G)$, $q'(G)<q(G)$, and $q'(u)\leq q(u)$ for each $u\in V(G)$.
Additionally, $q$ is \emph{$\vartriangleleft_\chi^G$-minimal} if there is no vertex-weight function $q'\colon V(G)\to\mathbb{N}_0$ with $q'\vartriangleleft_\chi^G q$.
If $q$ is not $\vartriangleleft_\chi^G$-minimal, then there is a $\vartriangleleft_\chi^G$-minimal vertex-weight function $q''\colon V(G)\to\mathbb{N}_0$ 
with $q''\vartriangleleft_\chi^G q$ as $\chi_q(G)>0$ and $q(G)$ is finite.
We further note that a graph $G$ is \emph{critical} if $q\colon V(G)\to[1]$ is $\vartriangleleft_\chi^G$-minimal.

Let $G$ be a graph. 
An \emph{expansion} of $G$ is a graph $G'$
whose vertex set can be split into pairwise disjoint sets $\{X_u\}_{u\in V(G)}$ such that
\begin{itemize}
\item $X_u$ is empty or a clique,
\item $X_u$ is complete to $X_v$ if $uv\in E(G)$ and $X_u,X_v\neq \emptyset$, and
\item $X_u$ is anticomplete to $X_v$ if $uv\notin E(G)$  and $X_u,X_v\neq \emptyset$
\end{itemize}
for each two distinct vertices $u,v\in V(G)$.
If $q\colon V(G)\to\mathbb{N}_0$ is a vertex-weight function, then a \emph{$q$-expansion} of $G$ is an expansion of $G$ with corresponding vertex sets $\{X_u\}_{u\in V(G)}$ such that $|X_u|=q(u)$ for each $u\in V(G)$.
We note that, for a \emph{strong} expansion $G'$ of $G$, there is a vertex-weight function $q\colon V(G)\to\mathbb{N}_{>0}$ such that $G'$ is a $q$-expansion of $G$.

As usual, $C_n$, $K_n$, and $P_n$ denote a cycle, a complete graph, and a path of order $n$, respectively, and $K_{n,m}$ denotes a complete bipartite graph whose partite sets have sizes $n$ and $m$. Additionally, if $P\colon u_1u_2u_3u_4$ is a path on $4$ vertices and $F$ is an arbitrary graph that is vertex disjoint from $P$, then $\Q{F}$ is the graph obtained from $P\cup F$ by removing $u_3$ and adding all edges from $\{u_2,u_4\}$ to $V(F)$ (see Fig.~\ref{fig_forbidden_induced_Subgraphs} for an example where $F$ is a path on $4$ vertices as well).

Finally, recall that $G$ is \emph{$(C_5,C_7,\ldots)$-free} if $G$ is $C_{2k+5}$-free for each $k\in\mathbb{N}_0$.

\section{Preliminaries}

Before we proceed with preliminary results that are used in later proofs, we establish Lemma~\ref{lemma_main_non-linear} first. Recall its statement.
\begin{non}{Lemma~\ref{lemma_main_non-linear}}
Let $\mathcal{H}$ be a set of graphs and $h$ be an integer such that $\bar{H}$ has girth at most $h$ for each graph $H\in \mathcal{H}$.
If the class of $\mathcal{H}$-free graphs is $\chi$-bounded, then $f_\mathcal{H}^\star$ cannot be bounded from above by a linear function.
\end{non}
\begin{proof}
We may assume that the class of $\mathcal{H}$-free graphs is $\chi$-bounded. Furthermore, by definition, we have $h\geq 3$.
By a result of Bollob\'as \cite{B}, for each two integers $g,\Delta\geq 3$, there is a graph $G_{g,\Delta}$ of girth at least $g+1$ and maximum degree $\Delta$ for which 
$$\frac{\alpha(G_{g,\Delta})}{|V(G_{g,\Delta})|}< \frac{2\log(\Delta)}{\Delta}.$$
Hence, there is a series $\{\bar{G}_{h,i}\}_{i=3}^\infty$ such that, for each $i\geq 3$, $\bar{G}_{h,i}$ is a graph whose complementary graph is $G_{h,i}$.
For $i\geq 3$, 
note that $\bar{G}_{h,i}$ is $\mathcal{H}$-free. Since $G_{h,i}$ is $C_3$-free, it follows
$\alpha(\bar{G}_{h,i})=\omega(G_{h,i})\leq 2.$
Furthermore, $$\omega(\bar{G}_{h,i})<\frac{2\log(i)}{i}\cdot |V(\bar{G}_{h,i})|,$$
and so 
$$
\frac{i}{4\cdot \log(i)}\cdot \omega(\bar{G}_{h,i})<\frac{|V(\bar{G}_{h,i})|}{2}\leq \frac{|V(\bar{G}_{h,i})|}{\alpha(\bar{G}_{h,i})}\leq\chi(\bar{G}_{h,i}).
$$
Note that 
$$
\lim_{i\to\infty}\frac{i}{4\cdot \log(i)}=+\infty.
$$
Thus, $f_\mathcal{H}^\star$ cannot be bounded from above by a linear function.
\end{proof}

One central result in Lov\'asz'~\cite{L} proof of the Weak Perfect Graph Theorem is a lemma on so-called `perfect'-expansions. For our purpose the following weaker version suffices:
\begin{lemma}[part of a stronger result proven in Lov\'asz~\cite{L}]\label{preliminary_lemma_lovasz}
If $G$ is a perfect graph, then each expansion of $G$ is perfect.
\end{lemma}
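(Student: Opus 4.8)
The plan is to unwind the definition of a `perfect'-expansion and reduce the statement to a single substitution step. By the paper's definition, a `perfect'-expansion $G'$ of $G$ arises from a finite series $G=G_1,G_2,\dots,G_{n+1}=G'$ in which each $G_{i+1}$ is obtained from $G_i$ by replacing one (still present, original) vertex $u_i$ by a vertex-disjoint perfect graph and joining it completely to $N_{G_i}(u_i)$. Hence, by induction on the number of expanded vertices, it suffices to prove the one-vertex \emph{substitution lemma}: if $G$ and $H$ are perfect and $v\in V(G)$, then the graph $F$ obtained from $G$ by substituting $H$ for $v$ (keeping the edges inside $H$ and joining every vertex of $H$ to every vertex of $N_G(v)$) is perfect. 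The main work lies entirely in this single step. I would be careful to prove it without appealing to the Weak Perfect Graph Theorem, since this lemma is precisely a stepping stone toward it.

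To prove the substitution lemma I would induct on $|V(G)|+|V(H)|$. First I would observe that every proper induced subgraph $F'$ of $F$ is again a substitution $G'(v\to H')$ of an induced subgraph $G'\subseteq G$ into an induced subgraph $H'\subseteq H$ (with the degenerate case $H'=\emptyset$ giving $F'=G[\,\cdot\,]$, an induced subgraph of $G$). Since induced subgraphs of perfect graphs are perfect and the total order drops, the induction hypothesis yields $\chi(F')=\omega(F')$ for every proper induced subgraph; thus it remains only to verify $\chi(F)=\omega(F)$ for $F$ itself. A short clique count gives $\omega(F)=\max\{\omega(G-v),\,\omega(H)+\omega(G[N_G(v)])\}$.

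The key idea for colouring $F$ with $\omega(F)$ colours is to pass through a clique replacement. Let $t=\omega(H)$ and let $G^{+}$ be the graph obtained from $G$ by replacing $v$ by a clique on $t$ vertices (that is, $v$ together with $t-1$ true twins). I would first establish the \emph{replication lemma}: adding one true twin of a vertex to a perfect graph keeps it perfect; iterating gives that $G^{+}$ is perfect, and a clique count shows $\omega(G^{+})=\omega(F)$. Consequently $G^{+}$ admits a proper colouring with $\omega(F)$ colours in which the $t$ clique-vertices replacing $v$ receive $t$ distinct colours, say $1,\dots,t$, and hence \emph{every} vertex of $N_G(v)$ avoids all of $\{1,\dots,t\}$ (each such vertex is complete to that clique). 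Keeping this colouring on $V(G)\setminus\{v\}$ and recolouring the copy of $H$ by an optimal $\chi(H)=\omega(H)=t$ colouring using only the palette $\{1,\dots,t\}$ (possible since $H$ is perfect) yields a proper $\omega(F)$-colouring of $F$, completing the induction.

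The hard part will be the replication lemma, since it is the only place where perfection is used in an essential, non-formal way. I would prove it by induction on $|V(G)|$, again reducing to $\chi=\omega$ for the whole graph and splitting into two cases. If $v$ lies in a maximum clique of $G$, then the clique number increases by one and a fresh colour for the twin suffices. If $v$ lies in no maximum clique, then the clique number is unchanged, and the crucial step is to take an optimal colouring of $G$, delete from $G$ all but $v$ of the stable colour class $S$ through $v$ — which meets every maximum clique in a vertex other than $v$ and therefore lowers $\omega$ by one — colour the remainder with $\omega(G)-1$ colours by perfection, and reinsert the deleted vertices together with the twin as a single new colour class (which is independent because those vertices are non-neighbours of both $v$ and its twin). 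Verifying that this reassembled colouring is proper and uses exactly $\omega(G)$ colours is the delicate point.
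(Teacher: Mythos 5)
The paper does not prove this lemma at all: it is quoted verbatim from Lov\'asz~\cite{L} as a known black box, so there is no in-paper argument to compare yours against. Your reconstruction is correct and is essentially Lov\'asz's original proof of the replication/substitution lemma as it appears in the standard literature: the outer induction reducing a `perfect'-expansion to a single substitution $G(v\to H)$, the device of first replacing $v$ by a clique of size $t=\omega(H)$ so that the $t$ colours freed on that clique can be recycled to colour $H$ (using $\chi(H)=\omega(H)$), and the two-case proof of the replication lemma (a fresh colour for the twin when $v$ lies in a maximum clique; otherwise delete $S\setminus\{v\}$ for the colour class $S$ of $v$, note that $\omega$ drops because every maximum clique meets every colour class of an $\omega$-colouring but misses $v$, recolour with $\omega(G)-1$ colours by perfection, and reinsert $(S\setminus\{v\})\cup\{v'\}$ as one new independent class). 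The only points you flag as delicate are indeed the right ones, and both check out: the clique count $\omega(G^{+})=\omega(F)=\max\{\omega(G-v),\ \omega(H)+\omega(G[N_G(v)])\}$ holds because a maximum clique of $F$ meets the copy of $H$ in a clique of $H$ and the rest of $F$ in a clique of $G[N_G(v)]$, and the reassembled colouring in the replication lemma is proper because the reinserted vertices receive a colour used nowhere in $G-(S\setminus\{v\})$. You are also right to insist on not invoking the Weak Perfect Graph Theorem, since this lemma is a lemma in its proof; your argument respects that.
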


We continue by an observation concerning the chromatic and clique numbers of $q$-expansions of a graph.

\begin{observation}\label{preliminary_obs_q_expansion}
If $G$ is a graph, $q\colon V(G)\to\mathbb{N}_0$ is a vertex-weight function, and $G'$ is a $q$-expansion of $G$, then 
$$\chi(G')=\chi_q(G)\quad\text{and}\quad\omega(G')=\omega_q(G).$$
\end{observation}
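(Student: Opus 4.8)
The plan is to prove the two equalities directly from the definitions of $q$-expansion, $q$-chromatic number, and $q$-clique number. Recall that a $q$-expansion $G'$ of $G$ replaces each vertex $u \in V(G)$ by a complete graph (clique) $K_u$ on $q(u)$ vertices, with $K_u$ and $K_v$ joined by a complete bipartite graph whenever $uv \in E(G)$ and kept anti-complete otherwise. The guiding idea is that colourings of $G'$ correspond bijectively to $q$-colourings of $G$: a set $L(u)$ of $q(u)$ colours assigned to the vertex $u$ in a $q$-colouring is exactly the set of colours used on the clique $K_u$ in a colouring of $G'$. Since each $K_u$ is complete, its $q(u)$ vertices must receive $q(u)$ pairwise distinct colours, so the colours used on $K_u$ form a set of size exactly $q(u)$, matching the requirement $|L(u)| = q(u)$ for a $q$-colouring.

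For the chromatic number equality $\chi(G') = \chi_q(G)$, I would argue both inequalities. For $\chi(G') \le \chi_q(G)$, take a proper $q$-colouring $L \colon V(G) \to 2^{[k]}$ of $G$ with $k = \chi_q(G)$, and use it to define a proper colouring of $G'$: assign to the $q(u)$ vertices of $K_u$ the $q(u)$ distinct colours of $L(u)$ in any fixed order. Properness within each $K_u$ holds because the colours of $L(u)$ are distinct, and properness across an edge $uv \in E(G)$ holds because $L$ is proper and so $L(u) \cap L(v) = \emptyset$; non-adjacent cliques impose no constraint. This uses at most $k$ colours. Conversely, for $\chi_q(G) \le \chi(G')$, take a proper colouring of $G'$ with $\chi(G')$ colours and define $L(u)$ to be the set of colours appearing on $K_u$; as noted, $|L(u)| = q(u)$, and properness of $L$ across edges of $G$ follows because the complete join between $K_u$ and $K_v$ forces their colour sets to be disjoint. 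Hence $L$ is a proper $q$-colouring using $\chi(G')$ colours.

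For the clique number equality $\omega(G') = \omega_q(G)$, I would observe that cliques of $G'$ are built from cliques of $G$: a maximal clique of $G'$ is the union $\bigcup_{u \in S} K_u$ over a clique $S$ of $G$ (here using that each $K_u$ is itself complete and that distinct $K_u, K_v$ are completely joined precisely when $uv \in E(G)$, so a set of vertices of $G'$ is a clique iff the set of $G$-vertices it touches forms a clique and it contains full freedom within each touched $K_u$). The size of such a clique of $G'$ is $\sum_{u \in S} q(u) = q(S)$, and maximising over cliques $S$ of $G$ gives $\omega(G') = \max\{q(S) : S \text{ a clique of } G\} = \omega_q(G)$ by definition.

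I expect the only subtle point, and thus the main thing to state carefully rather than a genuine obstacle, to be the structural claim that every clique of $G'$ arises from a clique of $G$ in this way — i.e.\ that two vertices in distinct cliques $K_u, K_v$ are adjacent in $G'$ if and only if $uv \in E(G)$, which is exactly the defining adjacency of a $q$-expansion. Once this is pinned down, both equalities reduce to the correspondence between colour sets $L(u)$ and colours on $K_u$, and between cliques of $G$ weighted by $q$ and cliques of $G'$. Since $q$-expansion is a special case of `complete'-expansion, one could alternatively phrase the proof as an induction along the vertex ordering $\prec$ that defines the expansion, expanding one vertex at a time; but the direct argument above is cleaner and avoids the bookkeeping of the inductive series $\{G_i\}$.
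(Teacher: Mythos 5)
Your proof is correct. The paper states this result as an Observation and gives no proof at all, so there is nothing to compare against; your direct argument --- the bijection between proper colourings of $G'$ and proper $q$-colourings of $G$ via the colour sets on the cliques $K_u$, together with the fact that every clique of $G'$ is $\bigcup_{u\in S}K_u$ for a clique $S$ of $G$ --- is exactly the standard justification the authors implicitly rely on, and it handles the degenerate case $q(u)=0$ (empty $K_u$, $L(u)=\emptyset$) without difficulty.
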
 

Note that Observation~\ref{preliminary_obs_q_expansion} together with Lemma~\ref{preliminary_lemma_lovasz} implies $\chi_q(G)=\omega_q(G)$ for each perfect graph $G$ and each vertex-weight function $q\colon V(G)\to\mathbb{N}_0$. 

Following Narayanan and Shende~\cite{NS}, who proved
$$\chi(G)=\max\left\{\omega(G),\left\lceil\frac{|V(G)|}{\alpha(G)}\right\rceil\right\}$$
for each expansion $G$ of a cycle of length at least $4$, we can determine the $q$-chromatic number of a cycle of length $5$ by Observation~\ref{preliminary_obs_q_expansion}.

\begin{cor}\label{preliminary_cor_C5}
Let $\omega\in\mathbb{N}_{>0}$. If $C$ is a cycle of length $5$ and $q\colon V(C)\to\mathbb{N}_0$ is a vertex-weight function such that $\omega_q(C)=\omega$, then 
$$\chi_q(C)=\max\left\{\omega,\left\lceil\frac{q(C)}{2}\right\rceil\right\}\leq\left\lceil\frac{5\omega-1}{4}\right\rceil,$$
and this bound is tight.
\end{cor}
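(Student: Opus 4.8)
The plan is to derive the equality directly from Observation~\ref{preliminary_obs_q_expansion} and the formula of Narayanan and Shende~\cite{NS}, and then to establish the displayed inequality together with its tightness by an elementary double-counting argument and two explicit weightings. Throughout write $\omega=\omega_q(C)$ and $n=q(C)$, and let $v_1v_2v_3v_4v_5v_1$ be the vertices of $C$ along the cycle.

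First I would pass to the $q$-expansion $G'$ of $C$, which is a `complete but possibly empty'-expansion of a cycle of length $5\geq 4$. By Observation~\ref{preliminary_obs_q_expansion} we have $\chi(G')=\chi_q(C)$ and $\omega(G')=\omega_q(C)$, and $|V(G')|=q(C)$, so the Narayanan--Shende formula gives $\chi_q(C)=\max\{\omega_q(C),\lceil q(C)/\alpha(G')\rceil\}$. The key point is that $\alpha(G')\leq 2$: every independent set of $G'$ meets each clique-bag in at most one vertex, and the bags it meets correspond to an independent set of $C$, which has at most $2$ vertices as $C$ is triangle-free. If two non-adjacent vertices carry positive weight, then $\alpha(G')=2$ and the formula reads exactly $\max\{\omega_q(C),\lceil q(C)/2\rceil\}$; otherwise the positive-weight vertices lie in a single clique, $G'$ is complete, and $\chi_q(C)=q(C)=\omega_q(C)\geq\lceil q(C)/2\rceil$, so the same expression holds. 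This settles the equality.

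For the inequality I would bound each term of the maximum by $\lceil(5\omega-1)/4\rceil$. The estimate $\omega\leq\lceil(5\omega-1)/4\rceil$ is immediate from $\omega\geq 1$. For the other term, note that every vertex of $C$ has degree $2$ and each edge $\{v_i,v_{i+1}\}$ is a clique of weight at most $\omega$, so double-counting yields $2n=\sum_{i=1}^{5}q(\{v_i,v_{i+1}\})\leq 5\omega$, hence $n\leq\lfloor 5\omega/2\rfloor$. A short case distinction on the parity of $\omega$ then gives $\lceil n/2\rceil\leq\lceil(5\omega-1)/4\rceil$: for odd $\omega$ one has $n\leq(5\omega-1)/2$ and hence $n/2\leq(5\omega-1)/4$ directly, while for even $\omega$ one checks the identity $\lceil 5\omega/4\rceil=\lceil(5\omega-1)/4\rceil$ (both values equal $5\omega/4$ when $\omega\equiv 0\pmod 4$ and $(5\omega+2)/4$ when $\omega\equiv 2\pmod 4$).

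Finally, for tightness I would exhibit extremal weightings. For even $\omega$ the uniform weighting $q(v_i)=\omega/2$ satisfies $\omega_q(C)=\omega$ and $q(C)=5\omega/2$, so $\chi_q(C)=\lceil 5\omega/4\rceil=\lceil(5\omega-1)/4\rceil$. For odd $\omega$ the weighting $\bigl(\tfrac{\omega-1}{2},\tfrac{\omega+1}{2},\tfrac{\omega-1}{2},\tfrac{\omega+1}{2},\tfrac{\omega-1}{2}\bigr)$ along the cycle has four consecutive sums equal to $\omega$ and the remaining one equal to $\omega-1$, so $\omega_q(C)=\omega$ and $q(C)=(5\omega-1)/2$, whence $\chi_q(C)=\lceil(5\omega-1)/4\rceil$ since $\lceil(5\omega-1)/4\rceil\geq\omega$ for $\omega\geq 1$. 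I expect the only delicate step to be the parity bookkeeping in the ceiling estimate, together with the degenerate case $\alpha(G')=1$ in the first part; the remaining computations are routine.
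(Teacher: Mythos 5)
Your proof is correct and follows essentially the same route as the paper: both reduce the equality to Observation~\ref{preliminary_obs_q_expansion} and the Narayanan--Shende formula, then verify $\lceil q(C)/2\rceil\leq\lceil(5\omega_q(C)-1)/4\rceil$ by bounding $q(C)\leq\lfloor 5\omega_q(C)/2\rfloor$ (you by double counting over the five edges, the paper by splitting off a maximum-weight edge and bounding the fifth vertex by $\lfloor\omega_q(C)/2\rfloor$), and both exhibit the same balanced extremal weighting. Your explicit handling of the degenerate case $\alpha(G')=1$ is a small point the paper glosses over, but this does not change the substance of the argument.
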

\begin{proof}
In view of Observation~\ref{preliminary_obs_q_expansion} and the result of Narayanan and Shende~\cite{NS}, it remains to show 
$$\left\lceil\frac{q(C)}{2}\right\rceil\leq\left\lceil\frac{5\omega-1}{4}\right\rceil$$
and that this bound is tight.
Renaming vertices if necessary, let us assume that\linebreak $C\colon c_1c_2c_3c_4c_5c_1$ is defined such that $\omega=q(\{c_1,c_2\})$ and $q(c_1)\geq q(c_2)$. Thus, $q(\{c_3,c_4\})\leq \omega$ and 
$q(c_5)\leq\lfloor\omega/2\rfloor$. Furthermore, for $n,m\in\mathbb{N}_0$ with $\omega=4n+m$ and $m<4$, we have 
\begin{align*}
\left\lceil\frac{q(C)}{2}\right\rceil&\leq
\omega+\left\lceil \frac{\left\lfloor\frac{\omega}{2}\right\rfloor}{2} \right\rceil
=\omega+\left.\begin{cases}
n&\text{if }m\leq 1\\ 
n+1&\text{if }m\geq 2\\ 
\end{cases}\right\}
= \omega+\left\lceil\frac{\omega-1}{4}\right\rceil
=\left\lceil\frac{5\omega-1}{4}\right\rceil.
\end{align*}
From this chain of inequalities it follows that the bound is tight if $$q(c_1)=q(c_3)=\lceil\omega/2\rceil\quad\text{and}\quad q(c_2)=q(c_4)=q(c_5)=\lfloor\omega/2\rfloor,$$ which completes our proof.
\end{proof}

\section{The technique}

In this section, we introduce the technique that can be used to prove Theorem~\ref{thm_main} and Theorem~\ref{thm_critical_graphs}. In view of its application to other graph classes, e.g.\;subclasses of $\Q{P_4}$-free graphs as in~\cite{BG}, we introduce all results in a general setting.

We concentrate next on our combination of homogeneous sets and clique-separators, namely the so-called clique-separators of modules. 
Note that each clique-separator is a clique-separator of modules. Having this observation in mind, the following lemma generalises the fact that critical graphs do not contain clique-separators since it implies that $G[q]$, for some $\ql{G}$-minimal vertex-weight function $q\colon V(G)\to \mathbb{N}_0$, does not contain a clique-separator of modules.

\begin{lemma}\label{preliminary_lemma_cutsets}
If $G,G_1,G_2$ are three graphs such that $G=G_1\cup G_2$ and $V(G_1)\cap V(G_2)$ is a clique-separator of modules in $G$, and $q\colon V(G)\to\mathbb{N}_0$ is a vertex-weight function,
then 
$$\chi_q(G)= \max\{\chi_q(G_1),\chi_q(G_2)\}\quad\text{and}\quad\omega_q(G)=\max\{\omega_q(G_1),\omega_q(G_2)\}.$$
\end{lemma}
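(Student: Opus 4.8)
The plan is to prove each equality by establishing the two inequalities separately. Throughout, write $S:=V(G_1)\cap V(G_2)=X_1\cup\cdots\cup X_k$ for the clique-separator of modules, and set $A:=V(G_1)\setminus V(G_2)$ and $B:=V(G_2)\setminus V(G_1)$, so that $V(G)=A\cup S\cup B$ is a partition. The observation underlying everything is that there is no edge of $G$ between $A$ and $B$: such an edge would lie in $E(G)=E(G_1)\cup E(G_2)$ but in neither $E(G_1)$ (its $B$-endpoint is outside $V(G_1)$) nor $E(G_2)$ (its $A$-endpoint is outside $V(G_2)$). Now the inequalities $\omega_q(G_i)\le\omega_q(G)$ and $\chi_q(G_i)\le\chi_q(G)$ are immediate, since each $G_i$ is an induced subgraph of $G$. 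For $\omega_q(G)\le\max\{\omega_q(G_1),\omega_q(G_2)\}$ I would use the $A$--$B$ anti-completeness: no clique of $G$ contains both an $A$-vertex and a $B$-vertex, so every clique lies in $A\cup S=V(G_1)$ or in $S\cup B=V(G_2)$; applying this to a $q$-clique witnessing $\omega_q(G)$ gives the bound. This settles the clique-number equality.

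The substance is the bound $\chi_q(G)\le t$, where $t:=\max\{\chi_q(G_1),\chi_q(G_2)\}$. Fix proper $q$-colourings $L_1\colon V(G_1)\to 2^{[t]}$ and $L_2\colon V(G_2)\to 2^{[t]}$. The first step is a normalisation: I would recolour each $G_i$ so that it uses exactly $\chi_q(G[X_j])$ colours on every module $X_j$, noting that $G[X_j]=G_1[X_j]=G_2[X_j]$ and that each $X_j$, being a module of $G$, is a module of $G_1$ and of $G_2$. Processing the modules one at a time, when treating $X_j$ the current colouring uses at least $\chi_q(G[X_j])$ colours on $X_j$, all disjoint from the colours on $N_{G_i}(X_j)$; hence $[t]\setminus L(N_{G_i}(X_j))$ contains at least $\chi_q(G[X_j])$ colours, and I can recolour $X_j$ with an optimal colouring of $G[X_j]$ drawn from this set, preserving properness and the bound $t$. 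Since every other module lies in $N_{G_i}(X_j)$ (the cross-edges being complete), the already-treated modules are untouched and automatically avoided. After this step $|L_1(X_j)|=|L_2(X_j)|=\chi_q(G[X_j])$ for every $j$.

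Because the modules are pairwise completely joined, the sets $L_1(X_1),\dots,L_1(X_k)$ are pairwise disjoint, and likewise for $L_2$; together with the now-equal cardinalities this lets me choose a permutation $\sigma$ of $[t]$ with $\sigma(L_2(X_j))=L_1(X_j)$ for every $j$. I then define $L:=L_1$ on $V(G_1)$ and $L:=\sigma\circ L_2$ on $B$; the two domains are disjoint and cover $V(G)$. Every edge of $G$ lies in $G_1$ or $G_2$, and by the $A$--$B$ anti-completeness every edge meeting $B$ lies in $G_2$, so properness of $L$ reduces to three checks: edges inside $V(G_1)$ are handled by $L_1$, edges inside $B$ by the proper colouring $\sigma\circ L_2$, and the only delicate case is an edge $bx$ with $b\in B$ and $x\in X_j$. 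Here is the crux: the module property forces $b$ to be adjacent to \emph{all} of $X_j$, so $\sigma(L_2(b))\cap\sigma(L_2(X_j))=\emptyset$; since $\sigma(L_2(X_j))=L_1(X_j)\supseteq L_1(x)$, we get $L(b)\cap L(x)=\emptyset$. Thus matching the module palettes as sets, rather than vertexwise, suffices, and $L$ is a proper $q$-colouring of $G$ with colours in $[t]$, giving $\chi_q(G)\le t$.

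The main obstacle, which the plan is built to circumvent, is that $S$ need not be a clique, so $L_1$ and $L_2$ cannot in general be reconciled on $S$ by a single colour permutation as in the classical clique-cutset argument: within a module, non-adjacent vertices may share colours, and their patterns under $L_1$ and $L_2$ need not coincide. The two devices that resolve this are the normalisation equalising the per-module palette sizes and the observation that, thanks to the module structure, only the palette $L(X_j)$ as a set — not the individual colours of its vertices — needs to be controlled across the interface.
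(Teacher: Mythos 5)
Your proof is correct and follows essentially the same route as the paper's: both arguments normalise the two colourings so that each module $X_j$ of the separator receives exactly $\chi_q(G[X_j])$ colours (exploiting that the modules are pairwise completely joined and completely joined to their neighbourhoods), and then glue across the separator, with the clique-number equality coming from the anti-completeness of $V(G_1)\setminus V(G_2)$ and $V(G_2)\setminus V(G_1)$. The only cosmetic difference is that the paper recolours both sides with a common colouring so that they agree vertex-wise on the separator, whereas you match only the per-module palettes by a permutation and let the module property handle the interface edges.
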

\begin{proof}
Clearly, $\omega_q(G)\geq \max\{\omega_q(G_1),\omega_q(G_2)\}$. As there is no edge from $V(G_1)\setminus V(G_2)$ to $V(G_2)\setminus V(G_1)$ in $G$, equality follows.

Let $k\in\mathbb{N}_{>0}$ and $X,X_1,X_2\ldots,X_k\subseteq V(G)$ be sets such that $X=X_1\cup X_2\cup\ldots\cup X_k=V(G_1)\cap V(G_2)$ and $X_1,X_2,\ldots,X_k$ are the modules of $X$.

As $G_1$ and $G_2$ are subgraphs of $G$, we have $\chi_q(G)\geq \max\{\chi_q(G_1),\chi_q(G_2)\}$.
We colour $G[X]$ optimally, that is, there is a $q$-colouring $L_X\colon X\to 2^{[\chi_q(G[X])]}$ of $G[X]$.
As each $X_i$ is complete to $X\setminus X_i$ in $G[X]$, we find that $L_X$ uses $\chi_q(G[X_i])$ colours on $X_i$.
Furthermore, $L_X$ uses disjoint sets of colours for $X_i$ and for $X_j$ if $i,j\in[k]$ are distinct.
Let $j\in [2]$, and $L_j\colon V(G_j)\to 2^{[\chi_q(G_j)]}$ be a $q$-colouring of $G_j$. As each $X_i$ is a module in $G_j$, it follows that we can recolour the vertices of $X_i$ in $L_j$ by any $q$-colouring of $G[X_i]$ using colours from $L_j(X_i)$. By permuting colours and applying the above arguments for each $X_i$, we may assume that $L_j$ and $L_X$ coincide on $X$.
As there is no edge from $V(G_1)\setminus V(G_2)$ to $V(G_2)\setminus V(G_1)$ in $G$, it follows
$$\chi_q(G)= \max\{\chi_q(G_1),\chi_q(G_2)\},$$
which completes our proof.
\end{proof}

Note that $\Q{P_4}$ contains induced copies of $\B$ and $C_4$. In order to prove our main results, we wish to establish some preliminary results for $\Q{P_4}$-free graphs  but begin by considering modules of $\Q{F}$-free graphs, where $F$ is arbitrary and not necessarily related to $P_4$.

\begin{lemma}\label{preliminary_lemma_hom-sets}
If $F$ is a graph and $G$ is a connected $\Q{F}$-free graph, then, for each module $M$ in $G$, $G[M]$ is $F$-free or $N_G(M)$ is a clique-separator of modules, or $N_G^2(M)=\emptyset$.
\end{lemma}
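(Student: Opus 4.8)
The plan is to prove the statement in the form: assuming $G[M]$ contains an induced copy $F'$ of $F$ \emph{and} $N_G^2(M)\neq\emptyset$, I will show that $N_G(M)$ is a clique-separator of modules. Since $M$ is a module, every vertex of $N_G(M)$ is adjacent to all of $M$, hence to all of $F'$; and because $N_G^2(M)\neq\emptyset$, both $N_G(M)$ and $V(G)\setminus N_G[M]$ are non-empty. These two facts are the only way the three hypotheses interact, so the whole argument is local to $N_G(M)$ and its outside.

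First I would isolate the forbidden configuration coming from $\Q{F}$-freeness. The key observation $(\ast)$ is: for any two \emph{non-adjacent} $p,q\in N_G(M)$ and any $r\in V(G)\setminus N_G[M]$, we have $r\sim p$ if and only if $r\sim q$. Indeed, such an $r$ has no neighbour in $M$, hence none in $F'$, while $p$ and $q$ are both complete to $F'$. If $r$ were adjacent to exactly one of them, say $p$, then $F'\cup\{p,q,r\}$ would induce a copy of $\Q{F}$: the central copy $F'$ plays the role of the expanded middle vertex, $p$ and $q$ are the two vertices complete to it (non-adjacent to each other), and $r$ is the pendant vertex adjacent to $p$ alone. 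This contradicts $\Q{F}$-freeness, giving $(\ast)$ by symmetry.

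Next I would build the module partition. Let $X_1,\dots,X_k$ be the vertex sets of the connected components of the complement of the graph $G[N_G(M)]$. Two vertices lying in distinct $X_i,X_j$ are adjacent in $G$ (otherwise they would lie in a common component of that complement), so $E_G[X_i,X_j]$ is complete, and clearly $X_1\cup\dots\cup X_k=N_G(M)$. I then claim each $X_i$ is a module of $G$: a vertex outside $X_i$ lies either in $M$ (adjacent to all of $N_G(M)$, hence to all of $X_i$), in another $X_j$ (complete to $X_i$ by the previous point), or in $V(G)\setminus N_G[M]$. In the last case, any two vertices of the co-connected set $X_i$ are joined by a path in the complement of $G[N_G(M)]$, i.e.\ by a sequence of consecutive non-adjacencies in $G$; applying $(\ast)$ step by step along this path shows that $r$ is adjacent either to all of $X_i$ or to none of it.

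Finally, since every vertex of $M$ has all of its neighbours in $N_G[M]$, the induced subgraphs $G_1=G[N_G[M]]$ and $G_2=G[V(G)\setminus M]$ satisfy $G=G_1\cup G_2$, $V(G_1)\cap V(G_2)=N_G(M)=X_1\cup\dots\cup X_k$, $V(G_1)\setminus V(G_2)=M\neq\emptyset$, and $V(G_2)\setminus V(G_1)\supseteq N_G^2(M)\neq\emptyset$; hence $N_G(M)$ is a clique-separator of modules. I expect the main obstacle to be the module property of the $X_i$: the crucial insight is that the correct partition of $N_G(M)$ is by co-components of $G[N_G(M)]$, and that $(\ast)$ must be chained along complement-paths to control how each outside vertex sees an entire $X_i$. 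By comparison, the pairwise-completeness of the $X_i$ and the construction of the separation $G_1,G_2$ are routine.
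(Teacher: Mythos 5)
Your proof is correct and follows essentially the same route as the paper's: the forbidden configuration $F'\cup\{p,q,r\}\cong \Q{F}$ (with $r$ anticomplete to $M$, $p,q$ complete to $F'$ and non-adjacent) is exactly the contradiction the paper derives, and the partition of $N_G(M)$ into co-components of $G[N_G(M)]$, each shown to be a module via connectivity in the complement, is the paper's decomposition as well. Your write-up is somewhat more explicit than the paper's in two places — chaining $(\ast)$ along complement-paths rather than just extracting one non-adjacent pair, and spelling out the separation $G_1=G[N_G[M]]$, $G_2=G[V(G)\setminus M]$ — but these are presentational differences, not a different argument.
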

\begin{proof}
Let us assume that $M$ is a module in $G$ such that $G[M]$ contains an induced copy of $F$ on vertex set $S$, and $N_G^2(M)\neq \emptyset$. 
As $N_G^2(M)\neq \emptyset$, we have $|M|<|V(G)|$.
We continue by showing that $N_G(M)$ is a clique-separator of modules.
Let $X_1,X_2,\ldots,X_k$ be the sets of vertices which induce the components of $\bar{G}[N_G(M)]$. 
Since 
$M\cup X_i$ is complete to $X_j$ in $G$ for each distinct $i,j\in[k]$ and $N_G^2(M)\neq \emptyset$, we may suppose, for the sake of a contradiction, that there is some $\ell\in[k]$ and a vertex $w\in N^2_G(M)$ for which $X_\ell\cap N_G(w)\neq \emptyset$ and $X_\ell\setminus N_G(w)\neq \emptyset$. Hence, by the connectivity of $\bar{G}[X_\ell]$, we may assume that $x_1\in X_\ell\cap N_G(w)$ and $x_2\in X_\ell\setminus N_G(w)$ are non-adjacent. Thus, $S\cup\{w,x_1,x_2\}$ induces a copy of $\Q{F}$, which contradicts our assumption that $G$ is $\Q{F}$-free. Thus, $X_\ell$ is a module in $G$, and $N_G(M)$ is a clique-separator of modules, which completes our proof.
\end{proof}

Let us focus on $\Q{P_4}$-free graphs next. It is rather interesting that every vertex-weight function of a $\Q{P_4}$-free graph can be nicely decomposed.

\begin{lemma}\label{preliminary_lemma_Q_P4}
If $G$ is a $\Q{P_4}$-free graph and $q\colon V(G)\to\mathbb{N}_0$ is a vertex-weight function, then there exist an integer $k\in\mathbb{N}_{>0}$, $k$ pairwise disjoint non-empty sets $M_1,M_2,\ldots, M_k\subseteq V(G[q])$, and $k$ $\ql{G}$-minimal vertex-weight functions $q_1,q_2,\ldots ,q_k\colon V(G)\to\mathbb{N}_0$ such that, for each $i,j\in [k]$,
\begin{enumerate}
\item[\rm (i)] $G[q_i]$ is a prime graph without clique-separators of modules and $V(G[q_i])\subseteq M_i$,
\item[\rm (ii)] $G[M_i]$ is a strong expansion of $G[q_i]$,
\item[\rm (iii)] $M_i$ is complete to $M_j$ in $G$ if $i$ and $j$ are distinct,
\item[\rm (iv)] $\omega_q(G[M_i])\geq \omega_{q_i}(G)$ with equality if $q$ is $\ql{G}$-minimal,
\item[\rm (v)] $\chi_q(G[M_i])=\chi_{q_i}(G)$ and
$$\chi_q(G)=\sum_{i=1}^k\chi_q(G[M_i]).$$
\end{enumerate}
\end{lemma}
\begin{proof}
In this proof, we consider pairs $(G,q)$ where $G$ is a $\Q{P_4}$-free graph and $q\colon V(G)\to\mathbb{N}_0$ is a vertex-weight function. Thus, by the definition of a vertex-weight function, we implicitly have $q(G)>0$ for each such pair. 
Furthermore, if $(G,q)$ is a pair that satisfies the statement of the lemma, then we say that $(G,q)$ is \emph{decomposable}.

We prove our lemma by contradiction.
For the sake of a contradiction, let us suppose that $(G,q)$ is a minimal counterexample to our lemma, that is, $(G,q)$ is not decomposable but each pair $(G',q')$ with either
$G'$ is an induced subgraph of $G$ with $G'\neq G$, or $G'=G$ and $|V(G[q'])|<|V(G[q])|$,  or $G'=G$ and $|V(G[q'])|=|V(G[q])|$ and $q'\ql{G} q$ is decomposable. 
We are now in a position to prove two properties of our minimal counterexample $(G,q)$.

We first prove $q(u)>0$ for each $u\in V(G)$.
If there is a vertex $u\in V(G)$ with $q(u)=0$, then, since $(G,q)$ is a minimal counterexample and $G[q]$ is an induced subgraph of $G$ with $G[q]\neq G$, we have that $(G[q],q)$ is decomposable, which also implies that $(G,q)$ is decomposable. The latter contradiction to our supposition on $(G,q)$ implies $G=G[q]$.

We next show that $q$ is $\ql{G}$-minimal by supposing, for the sake of a contradiction, the contrary. If $q'\colon V(G)\to\mathbb{N}_0$ is $\ql{G}$-minimal with $q'\ql{G}q$, then $(G,q')$ is decomposable into pairwise disjoint non-empty sets $M_1',M_2',\ldots,M_k'\subseteq V(G)$ and $\ql{G}$-minimal vertex-weight functions $q'_1,q'_2,\ldots ,q'_k\colon V(G)\to\mathbb{N}_0$ since $(G,q)$ is a minimal counterexample.
In what follows, we show that indeed $(G,q)$ is decomposable into the sets $M_1',M_2',\ldots,M_k'\subseteq V(G)$ and the vertex-weight functions $q'_1,q'_2,\ldots ,q'_k\colon V(G)\to\mathbb{N}_0$.
As $V(G[q'])\subseteq V(G[q])=V(G)$, it remains to prove (iv) and (v).
Since $q'\ql{G}q$ and $(G,q')$ is decomposable,
$$\chi_q(G[M_i'])\geq \chi_{q'}(G[M_i'])=\chi_{q_i'}(G)\quad\text{and}\quad \omega_q(G[M_i'])\geq \omega_{q'}(G[M_i'])\geq \omega_{q_i'}(G)$$
 for each $i\in[k]$.
In particular, (iv) follows.
It remains to prove (v). By definition, $\chi_{q'}(G)=\chi_q(G)$ as $q'\ql{G}q$.
As $M_i'$ is complete to $M_j'$ in $G$ for each distinct $i,j\in[k]$, we have
\begin{align*}\chi_q(G[M_i'])+\sum_{j\in[k]\setminus \{i\}} \chi_q(G[M_j'])&\leq \chi_q(G)=\chi_{q'}(G)=\sum_{i=1}^k \chi_{q'}(G[M_i'])\\
&\leq \chi_{q'}(G[M_i'])+\sum_{j\in[k]\setminus \{i\}} \chi_{q}(G[M_j']),
\end{align*}
and so $\chi_q(G[M_i'])\leq \chi_{q'}(G[M_i'])$ for each $i\in[k]$. 
Thus,
$$\chi_q(G[M_i'])=\chi_{q'}(G[M_i'])=\chi_{q_i'}(G)\quad\text{and}\quad \chi_{q}(G)=\chi_{q'}(G)=\sum_{i=1}^k\chi_{q'}(G[M_i'])=\sum_{i=1}^k\chi_q(G[M_i']).$$
As (v) follows, $(G,q)$ is decomposable, which contradicts our supposition on $(G,q)$. Therefore, we have that $q$ is $\ql{G}$-minimal.

As we have our two properties, we take an inclusion-wise minimal module $M_1$ in $G$ for which $N_G^2(M_1)=\emptyset$. Note that possibly $M_1=V(G)$. In what follows, we distinguish the two cases $M_1\neq V(G)$ and $M_1=V(G)$. We note that $G$ is connected as $q$ is $\ql{G}$-minimal.

\medskip

\noindent \textit{Case 1: $M_1\neq V(G)$}

\medskip

As $G$ is connected, $M_1$ is complete to $V(G)\setminus M_1$ in $G$.
For $S\in\{M_1,V(G)\setminus M_1\}$, let $q^S\colon V(G)\to\mathbb{N}_{0}$ be defined by
$$u\mapsto\begin{cases}
q(u)&\text{if } u\in S,\\
0&\text{if } u\notin S.\\
\end{cases}$$
Note that $q^{M_1}(G)>0$ and $q^{V(G)\setminus M_1}(G)>0$ as $M_1\neq\emptyset$, $M_1\neq V(G)$, and $G=G[q]$.
Furthermore, $\chi_q(G[M_1])=\chi_{q^{M_1}}(G)$ and $\chi_q(G-M_1)=\chi_{q^{V(G)\setminus M_1}}(G)$, and so
$$\chi_q(G)=\chi_{q^{M_1}}(G)+\chi_{q^{V(G)\setminus M_1}}(G).$$ Thus, $q^{M_1}$ and $q^{V(G)\setminus M_1}$ are $\ql{G}$-minimal since $q$ is $\ql{G}$-minimal. Hence, since \linebreak $|V(G[q^{M_1}])|,|V(G[q^{V(G)\setminus M_1}])|< |V(G[q])|$ and since $(G,q)$ is a minimal counterexample, we have that $(G,q^{M_1})$ and $(G,q^{V(G)\setminus M_1})$ are decomposable into pairwise disjoint non-empty sets \linebreak$M_1',M_2',\ldots,M_{k_1}'\subseteq V(G[q^{M_1}])$ and $M_{k_1+1}',M_{k_1+2}',\ldots,M_{k_1+k_2}'\subseteq V(q^{V(G)\setminus M_1})$ as well as $\ql{G}$-minimal vertex-weight functions $q_1',q_2',\ldots, q_{k_1}'\colon V(G)\to\mathbb{N}_0$ and $q_{k_1+1}',q_{k_1+2}',\ldots, q_{k_1+k_2}'\colon V(G)\to\mathbb{N}_0$, respectively.
In what follows, we show that indeed $(G,q)$ is decomposable into the sets \linebreak $M_1',M_2',\ldots,M_{k_1+k_2}'\subseteq V(G[q])$ and the vertex-weight functions  $q'_1,q'_2,\ldots ,q'_{k_1+k_2}\colon V(G)\to\mathbb{N}_0$.
As $V(G[q^{M_1}]),V(G[q^{V(G)\setminus M_1}])\subseteq V(G)$, it remains to prove (iii), (iv), and (v).
For (iii), it suffices to show that $M_i'$ is complete to $M_j'$ in $G$ for $i\in[k_1]$ and $j\in[k_1+k_2]\setminus[k_1]$ by the decompositions of $(G,q^{M_1})$ and $(G,q^{V(G)\setminus M_1})$. However, this fact follows from the observation that $M_1$ is complete to $V(G)\setminus M_1$ in $G$. Thus, (iii) is proven.
Additionally, since $q^S$ is $\ql{G}$-minimal, we have 
$$\omega_q(G[M_i'])=\omega_{q^S}(G[M_i'])=\omega_{q_i'}(G)\quad\text{and}\quad \chi_q(G[M_i'])=\chi_{q^S}(G[M_i'])=\chi_{q_i'}(G)$$
for each $i\in[k_1+k_2]$, and $S=M_1$ if $i\in[k_1]$ and $S=V(G)\setminus M_1$ if $i\notin[k_1]$. Thus, (iv) follows.
Furthermore,
$$\chi_q(G)=\chi_{q^{M_1}}(G)+\chi_{q^{V(G)\setminus M_1}}(G)=
\sum_{i=1}^{k_1}\chi_{q^{M_1}}(G[M_i'])+\sum_{i=k_1+1}^{k_1+k_2}\chi_{q^{V(G)\setminus M_1}}(G[M_i'])
=\sum_{i=1}^{k_1+k_2}\chi_q(G[M_i']).$$
Hence, (v) follows and $(G,q)$ is decomposable, which is a contradiction to our supposition on $(G,q)$.

\medskip

\noindent \textit{Case 2: $M_1=V(G)$}

\medskip
  
Recall that $G$ is connected and $q$ is $\ql{G}$-minimal, and so $G$ has no clique-separator of modules by Lemma~\ref{preliminary_lemma_cutsets}.

We show as a further property in this case that there is some integer $\ell\in\mathbb{N}_0$ and $\ell$ pairwise disjoint homogeneous sets $M_1',M_2',\ldots,M_\ell'$ of $G$ with the property that each homogeneous set $M$ of $G$ is a subset of one of $M_1',M_2',\ldots,M_\ell'$.
Let $M_2,M_3$ be two homogeneous sets in $G$ with $M_2\cap M_3\neq\emptyset$. For the sake of a contradiction, let us suppose that $M_2\cup M_3$ is not a homogeneous set in $G$. Hence, $M_2\setminus M_3, M_3\setminus M_2\neq\emptyset$, and we let $m_2\in M_2\setminus M_3$, $m_3\in M_3\setminus M_2$, and $m_4\in M_2\cap M_3$ be arbitrary vertices. Since $M_2$ and $M_3$ are modules, we have 
$$N_G(m_2)\setminus (M_2\cup M_3)=N_G(m_4)\setminus (M_2\cup M_3)=N_G(m_3)\setminus (M_2\cup M_3),$$
and so $N_G(m_2)\setminus (M_2\cup M_3)=\emptyset$ since $M_2\cup M_3$ is not a homogeneous set in $G$. Hence, $V(G)=M_2\cup M_3$ since $G$ is connected.
Clearly, $M_2\cap M_3$ is a module in $G$. Since $G$ has no clique-separators of modules, $M_2\cap M_3$ is not a clique-separator of modules, and so a vertex of $M_2\setminus M_3$ is adjacent to a vertex of $M_3\setminus M_2$. Hence, by the fact that $M_2$ and $M_3$ are modules, we have that each vertex of $M_2\cap M_3$ is adjacent to each vertex of $V(G)\setminus(M_2\cap M_3)$, and so $N_G^2(M_2\cap M_3)=\emptyset$, which contradicts the choice of $M_1$.
By this contradiction, $M_2\cup M_3$ is a homogeneous set for each two homogeneous sets in $G$ with $M_2\cap M_3\neq\emptyset$. 
Thus, let $M_1',M_2',\ldots,M_\ell'$ with $\ell\in\mathbb{N}_{0}$ be the inclusion-wise maximal homogeneous sets of $G$. As shown above, they are pairwise disjoint and each homogeneous set of $G$ is a subset of one of $M_1',M_2',\ldots,M_\ell'$.

Let $i\in[\ell]$ be arbitrary. We apply Lemma~\ref{preliminary_lemma_hom-sets}. Since, by the choice of $M_1$, $N_G^2(M_i')\neq\emptyset$ and since $G$ has no clique-separator of modules, it follows that $G[M_i']$ is $P_4$-free.
As shown by Seinsche \cite{Se} (see also the Strong Perfect Graph Theorem), $G[M_i']$ is perfect. By Lemma~\ref{preliminary_lemma_lovasz} and Observation~\ref{preliminary_obs_q_expansion}, it follows $\chi_q(G[M_i'])=\omega_q(G[M_i'])$. Since $q$ is $\ql{G}$-minimal, we obtain that $M_i'$ is a clique, and we fix a vertex $u_i'$ of $M_i'$ for each $i\in[\ell]$. 
Hence, let $k=1$ and $q_1\colon V(G)\to\mathbb{N}_0$ be a vertex-weight function with
$$u\mapsto\begin{cases}
q(M_i')&\text{if } u=u_i' \text{ for some } i\in[\ell],\\
0&\text{if } u\in M_i'\setminus\{u_i'\} \text{ for some } i\in[\ell],\\
q(u)&\text{if } u\notin \bigcup_{i=1}^\ell M_i'.\\
\end{cases}$$
Note that $q_1(v)=0$ if and only if $v\in M_i\setminus\{u_i'\}$ for some $i\in[\ell]$ as $G=G[q]$ and so $q(v)>0$. Thus, 
$q_1(G)=q(G)>0$.
In what follows, we show that $(G,q)$ is decomposable into the set $M_1$ and the vertex weight function $q_1$.  
First of all, we show (i). 
Clearly, $V(G[q_1])\subseteq M_1$.
As $G-((M_1'\cup M_2'\cup \ldots\cup M_\ell')\setminus\{u_1',u_2',\ldots,u_\ell'\})$ is prime and $q_1(v)>0$ for each of its vertices, it follows that $G[q_1]$ is prime as well.
For the sake of a contradiction, let us suppose that $X$ is a clique-separator of modules in $G[q_1]$. Since $G[q_1]$ is prime, every module of $X$ is of size $1$ and $X$ is simply a clique. Let 
$$X(x)=\begin{cases}
\{x\}&\text{if } x\notin \{u_1',u_2',\ldots,u_\ell'\},\\
M_i'&\text{if } x=u_i \text{ for some } i\in[\ell]\\
\end{cases}$$
for $x\in X$.
Since $M_1',M_2',\ldots,M_\ell'$ are pairwise disjoint modules which are cliques and for which $u_i'\in M_i'$ for each $i\in[\ell]$, $\bigcup_{x\in X} X(x)$ is a clique-separator of modules in $G$, which is a contradiction to the fact that, by Lemma~\ref{preliminary_lemma_cutsets} and the fact that $q$ is $\ql{G}$-minimal, such a set cannot exist.
Hence, (i) follows.
Furthermore, $G[M_1]$ is a strong expansion of $G[q_1]$, and so (ii) follows.
Clearly, (iii) is satisfied. 
It is further easily seen 
$$\omega_q(G)=\omega_q(G[M_1])=\omega_{q_1}(G)\quad\text{and}\quad \chi_q(G)=\chi_q(G[M_1])=\chi_{q_1}(G).$$
Thus, (iv) and (v) follow.
We conclude that $(G,q)$ is decomposable, which is a contradiction to our supposition on $(G,q)$. Therefore, our proof is complete.
\end{proof}

Lemma~\ref{preliminary_lemma_Q_P4} implies that, when studying $\chi$-binding functions in subclasses of $\Q{P_4}$-free graphs, the prime ones without clique-separators of modules are those of interest.

\begin{cor}\label{preliminary_corollary_main}
Let $G$ be a $\Q{P_4}$-free graph, $q\colon V(G)\to\mathbb{N}_0$ be a vertex-weight function, and $f\colon\mathbb{N}_{>0}\to\mathbb{N}_{>0}$ be a superadditive function. 
If $\chi_{q'}(G)\leq f(\omega_{q'}(G))$ for each $\ql{G}$-minimal vertex-weight function $q'\colon V(G)\to\mathbb{N}_0$ for which $G[q']$ is prime and has no clique-separator of modules, then
$$\chi_q(G)\leq f(\omega_q(G)).$$
\end{cor}
\begin{proof}
By Lemma~\ref{preliminary_lemma_Q_P4}, there is an integer $k\in\mathbb{N}_{>0}$ and there are $k$ $\ql{G}$-minimal vertex-weight functions $q_1,q_2,\ldots ,q_k\colon V(G)\to\mathbb{N}_0$ such that 
$$\chi_{q}(G)=\sum_{i=1}^k\chi_{q_i}(G)\quad\text{and}\quad\omega_q(G)\geq\sum_{i=1}^k\omega_{q_i}(G).$$
Furthermore, by Lemma~\ref{preliminary_lemma_Q_P4}, $G[q_i]$ is a prime graph without clique-separators of modules, and so
\linebreak
$\chi_{q_i}(G)\leq f(\omega_{q_i}(G))$
for each $i\in[k]$.
The superadditivity of $f$ implies
$$\chi_q(G)=\sum_{i=1}^k\chi_{q_i}(G)\leq \sum_{i=1}^k f(\omega_{q_i}(G))\leq f\left(\sum_{i=1}^k \omega_{q_i}(G)\right)\leq f(w_q(G)),$$
which completes our proof.
\end{proof}

Corollary~\ref{preliminary_corollary_main} obviously has a huge impact on studying $\chi$-binding functions. 
However, in view of its application, it is necessary that $\chi$-binding functions are superadditive. In what follows in the next lemma, we particularly show that $f^\star_{\{3K_1\}},f^\star_{\{C_5,3K_1\}}$, and $f^\star_{\{2K_2\}}$ are superadditive.

\begin{lemma}\label{preliminary_lemma_superadditiv}
Let $\mathcal{H}$ be a set of graphs such that no graph $H\in\mathcal{H}$ has a complete bipartite spanning subgraph.
If the class of $\mathcal{H}$-free graphs is $\chi$-bounded and $K_1$ is $\mathcal{H}$-free, then $f^\star_{\mathcal{H}}$ is superadditive.
\end{lemma}
\begin{proof}
We may assume that the class of $\mathcal{H}$-free graphs is $\chi$-bounded and $K_1$ is $\mathcal{H}$-free. Thus, $f^\star_{\mathcal{H}}(1)>0$. Let $w_1,w_2\geq 1$ be two arbitrary integers for which $f^\star_{\mathcal{H}}(\omega_1),f^\star_{\mathcal{H}}(\omega_2)>0$, $G'_1$ be an $\mathcal{H}$-free graph with $\omega(G'_1)=w_1$ and $\chi(G'_1)=f^\star_{\mathcal{H}}(w_1)$, and $G'_2$ be an $\mathcal{H}$-free graph with $\omega(G'_2)=w_2$ and $\chi(G'_2)=f^\star_{\mathcal{H}}(w_2)$ that is vertex disjoint from $G_1'$. 

Let $G$ be the graph obtained from $G'_1$ and $G'_2$ by adding all edges between the vertices of $G'_1$ and the vertices of $G'_2$. We prove first that $G$ is $\mathcal{H}$-free. For the sake of a contradiction, let us suppose that there is some $H\in\mathcal{H}$ for which $G$ contains a set $S$ of vertices inducing a copy of $H$. Since $G'_1$ and $G'_2$ are $H$-free, $s_1=|S\cap V(G'_1)|>0$ and $s_2=|S\cap V(G'_2)|>0$.
Therefore, the graph $G[S]$ has a spanning subgraph that is a copy of $K_{s_1,s_2}$. But now $G[S]\cong H$ gives a contradiction to our assumption that $H$ does not have a spanning subgraph which is a complete bipartite graph. Hence, $G$ is $\mathcal{H}$-free.

Clearly, $\omega(G)=w_1+w_2$ and $\chi(G)=\chi(G'_1)+\chi(G'_2)=f^\star_{\mathcal{H}}(w_1)+f^\star_{\mathcal{H}}(w_2)$, and so
$$f^\star_{\mathcal{H}}(w_1+w_2)\geq \chi(G)= f^\star_{\mathcal{H}}(w_1)+f^\star_{\mathcal{H}}(w_2)>0,$$
which completes our proof.
\end{proof}

Corollary~\ref{preliminary_corollary_main} is an important tool for determining $\chi$-binding functions in subclasses of $\Q{P_4}$-free graphs. However, as we wish to characterise critical graphs as well, we deduce the following from Lemma~\ref{preliminary_lemma_Q_P4}.

\begin{cor}\label{preliminary_cor_critrical}
If $G$ is a critical $\Q{P_4}$-free graph, then there is some integer $k\in\mathbb{N}_{>0}$ such that $V(G)$ can be partitioned into pairwise complete sets $M_1,M_2,\ldots,M_k$ such that $G[M_i]$ is a strong expansion of a prime graph without clique-separator of modules for each $i\in[k]$.
\end{cor}
\begin{proof}
Note that the vertex-weight function $q\colon V(G)\to[1]$ is $\ql{G}$-minimal since $G$ is critical. By Lemma~\ref{preliminary_lemma_Q_P4}, there exist an integer $k\in\mathbb{N}_{>0}$, $k$ pairwise disjoint non-empty sets $M_1,M_2,\ldots, M_k\subseteq V(G)$, and $k$ $\ql{G}$-minimal vertex-weight functions $q_1,q_2,\ldots, q_k\colon V(G)\to\mathbb{N}_0$ such that 
$V(G[q_i])\subseteq M_i$ and $G[M_i]$ is a strong expansion of $G[q_i]$ (and $G[q_i]$ is a prime graph without clique-separators of modules) for each $i\in[k]$,
$M_i$ is complete to $M_j$ in $G$ for each distinct $i,j\in [k]$, and
$$\chi(G)=\sum_{i=1}^k\chi(G[M_i]).$$
Since $G$ is critical, we conclude from the latter equality that $M_1,M_2,\ldots,M_k$ is indeed a partition of $V(G)$, which completes the proof.
\end{proof}

\section{$\Ha$-free graphs}

In this section, we prove Theorem~\ref{thm_main}~(ii) and Theorem~\ref{thm_critical_graphs}~(ii), and start showing the latter one.

\medskip

For the sake of a contradiction, let us suppose that $G$ is a critical $(P_5,\Ha)$-free graph that contains an induced copy of $2K_2$. As $G$ is critical, $G$ is connected. 
For two vertices $u,v\in V(G)$, we let $X_{u,v}=N_G(u)\cap N_G(v)$.

Let $u_1u_2$ be an arbitrary edge of $G$ such that $|E(G-N_G[\{u_1,u_2\}])|\geq 1$. 
If \linebreak $v\in N_G(\{u_1,u_2\})$, $w\in N_G(v)\cap N^2_G(\{u_1,u_2\})$, and $x\in N_G(w)\setminus N_G(\{u_1,u_2,v\})$, then, renaming vertices if necessary, we assume $u_1v\in E(G)$. Thus, $\{x,w,v,u_1,u_2\}$ induces a copy of $P_5$ if $u_2v\notin E(G)$ and a copy of $\Ha$ if $u_2v\in E(G)$, which is a contradiction to the fact that $G$ is $(P_5,\Ha)$-free. 
Hence, $N_G^i(\{u_1,u_2\})=\emptyset$ for $i\geq 3$, and each vertex subset of $N_G^2(\{u_1,u_2\})$ inducing a component of $G[N_G^2(\{u_1,u_2\})]$ is a module in $G$. 
Since $|E(G-N_G[\{u_1,u_2\}])|\geq 1$, there is some set $W$ of vertices which induces a component of $G-N_G[\{u_1,u_2\}]$ with at least one edge, say $w_1w_2$.
For each two adjacent vertices $w_3,w_4\in N_G^2(\{u_1,u_2\})$ and each $v\in N_G(\{u_1,u_2\})\cap N_G(\{w_3,w_4\})$, we have $v\in X_{u_1,u_2}\cap X_{w_3,w_4}$ since $\{u_1,u_2,v,w_3,w_4\}$ induces neither a copy of $\Ha$ nor a copy of $P_5$. Thus, for each set $W'$ of vertices inducing a component of $G-N_G[\{u_1,u_2\}]$ with at least one edge, say $w_3w_4$, we have that $W'$ is a module and $N_G(W')\subseteq X_{u_1,u_2}\cap X_{w_3,w_4}$. 
In particular, $N_G(W)\subseteq X_{u_1,u_2}\cap X_{w_1,w_2}$. Similarly, we obtain $N_G(U)\subseteq X_{u_1,u_2}\cap X_{w_1,w_2}$ for the set $U$ of vertices which induces the component of $G-N_G[\{w_1,w_2\}]$ which contains $u_1$ and $u_2$.
As $N_G(\{u_1,u_2\}),N_G(\{w_1,w_2\})\subseteq X_{u_1,u_2}\cap X_{w_1,w_2}$ (even equality holds by definition), we deduce that each of $U$ and $W$ induces a different component of 
$G-(X_{u_1,u_2}\cap X_{w_1,w_2})$.

Let $X_1,X_2,\ldots, X_k$ be the sets of vertices which induce the components of \linebreak $\bar{G}[X_{u_1,u_2}\cap X_{w_1,w_2}]$, and $i\in[k]$ be arbitrary.
We are going to show that $X_i$ is a module. For the sake of a contradiction, let us suppose that there is a vertex $y\in V(G)\setminus X_i$ with $X_i\cap N_G(y)\neq \emptyset$ and 
$X_i\setminus N_G(y)\neq \emptyset$. Clearly, $y\notin X_{u_1,u_2}\cap X_{w_1,w_2}.$ Since $\bar{G}[X_i]$ is connected, we may assume that $x_1\in X_i\cap N_G(y)$ and $x_2\in X_i\setminus N_G(y)$ are non-adjacent.
Let $Y$ be the set of vertices which induces the component of $G-(X_{u_1,u_2}\cap X_{w_1,w_2})$ that contains $y$.
If $|Y|=1$, then $u_1y \notin E(G)$ and $N_G(y)\subseteq N_G(u_1)$. Hence, $\chi_q(G-y)=\chi_q(G)$ since $y$ can be coloured by the colour of $u_1$ and we obtain a contradiction since $G$ is critical.
Thus, $|Y|\geq 2$ and there is a vertex $y'\in Y\cap N_G(y)$. 
As $u_1$ and $w_1$ are in different components of $G-(X_{u_1,u_2}\cap X_{w_1,w_2})$, we have $u_1,u_2\notin Y$ or $w_1,w_2\notin Y$. Renaming vertices if necessary, we may assume $u_1,u_2\notin Y$. Since $Y$ induces a component of $G-(X_{u_1,u_2}\cap X_{w_1,w_2})$, it is a module. Thus, $x_1y'\in E(G)$ but $x_2y'\notin E(G)$, and $\{x_2,u_1,x_1,y,y'\}$ induces a copy of $\Ha$, which is a contradiction to the fact that $G$ is $\Ha$-free.
Hence, $y$ does not exist, and $X_i$ is a module.
Let $Z_1=V(G)\setminus W$ and $Z_2=W\cup (X_{u_1,u_2}\cap X_{w_1,w_2})$. Clearly, $Z_1\cap Z_2=X_{u_1,u_2}\cap X_{w_1,w_2}$. 
Since $X_{u_1,u_2}\cap X_{w_1,w_2}$ is a clique-separator of the modules $X_1,X_2,\ldots, X_k$, we have 
$$\chi(G)=\max\{\chi(G[Z_1]),\chi(G[Z_2])\}$$
by Lemma~\ref{preliminary_lemma_cutsets}.
Since $u_1,u_2\in Z_1$ and $w_1,w_2\in Z_2$, 
we have that $G$ is not critical, which contradicts our assumption on $G$. Thus, $|E(G-N_G[\{v_1,v_2\}])|< 1$ for each edge $v_1v_2\in E(G)$, and so $G$ is $2K_2$-free, which is a contradiction to our supposition. Thus, Theorem~\ref{thm_critical_graphs}~(ii) follows.

\medskip

We continue and prove Theorem~\ref{thm_main}~(ii).
Since each $2K_2$-free graph is $(P_5,\Ha)$-free, it follows 
$$f^\star_{\{P_5,\Ha\}}(\omega)\geq f^\star_{\{2K_2\}}(\omega)$$
for each $\omega\in\mathbb{N}_{>0}$.
Furthermore, $f^\star_{\{2K_2\}}$ is superadditive by Lemma~\ref{preliminary_lemma_superadditiv}.
Let $G$ be a $(P_5,\Ha)$-free and $G'$ be a critical induced subgraph of $G$ such that $\chi(G')=\chi(G)$. By Theorem~\ref{thm_critical_graphs}~(ii), $G'$ is $2K_2$-free, and so
$$\chi(G)=\chi(G')\leq f^\star_{\{2K_2\}}(\omega(G'))\leq f^\star_{\{2K_2\}}(\omega(G)),$$
which completes our proof for Theorem~\ref{thm_main}~(ii).

\section{$\B$-free graphs}

This section is devoted to a proof of Theorem~\ref{thm_main} (i) and (iii), and Theorem~\ref{thm_critical_graphs} (i) and (iii).

\medskip

In order to prove Theorem~\ref{thm_critical_graphs} (i) and (iii), let $G$ be a critical $\B$-free graph which is $(C_5,C_7,\ldots)$-free or $P_5$-free.
By Corollary~\ref{preliminary_cor_critrical}, the vertex set of $G$ can be partitioned into $k\geq 1$ sets $M_1,M_2,\ldots,M_k$ such that $G[M_i]$ is a strong expansion of a prime graph $G_i$ for each $i\in[k]$ and $M_i$ is complete to $M_j$ in $G$ for each distinct $i,j\in[k]$.
Following the next two theorems, $G_i$ is $3K_1$-free or perfect for each $i\in[k]$. 
\begin{theorem}[Ho\'{a}ng \cite{H}]\label{banner_C_5}
If $G$ is a prime $(C_5,C_7,\ldots,\B)$-free graph of independence number at least $3$, then $G$ is perfect.
\end{theorem}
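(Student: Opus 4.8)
The plan is to deduce perfection from the Strong Perfect Graph Theorem. Since $G$ is $(C_5,C_7,\ldots)$-free by hypothesis, $G$ contains no odd hole, so it suffices to show that $\bar G$ contains no odd hole either, i.e.\ that $G$ contains no odd antihole $\overline{C_{2k+1}}$ with $k\ge 2$. For $k=2$ this is immediate because $\overline{C_5}=C_5$ is already excluded, so I would suppose, for contradiction, that $G$ contains an induced copy $H=\overline{C_n}$ with $n=2k+1\ge 7$, and label its vertices $v_1,\dots,v_n$ so that the non-edges of $H$ are exactly the cyclically consecutive pairs $v_iv_{i+1}$. The goal then becomes to show that $V(H)$ cannot sit inside a prime graph of independence number at least $3$.

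The heart of the argument is a local attachment lemma: every vertex $u\in V(G)\setminus V(H)$ is complete or anticomplete to $V(H)$. To prove it I would assume that $A:=N_G(u)\cap V(H)$ is a proper non-empty subset and construct a forbidden induced subgraph. First, $\overline{C_n}$ is vertex-transitive and, for $n\ge 7$, every vertex lies on an induced $C_4$ (for instance $\{v_1,v_2,v_4,v_5\}$, with diagonals $v_1v_2$ and $v_4v_5$); hence if $|A|=1$, then the unique neighbour of $u$ in $H$, together with such a $C_4$ through it, yields a $\B$ with $u$ as pendant. For larger $A$ I would exploit a colour change along the complement cycle: since $A$ and $V(H)\setminus A$ are both non-empty, there is a consecutive pair $v_i\in A$, $v_{i+1}\notin A$, and one shows that an induced $C_4$ can be produced either entirely inside $H$ meeting $A$ in exactly one vertex — giving a $\B$ with $u$ pendant — or using $u$ itself as one corner, after which a suitable vertex of $H$ attaches to exactly one corner and again completes a $\B$; the few remaining attachments instead force an induced $C_5$ (through $u$ and two adjacent non-neighbours of $u$ in $H$) or a longer odd hole, each excluded.

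With the lemma in hand the theorem follows quickly. If some vertex lies outside $H$, then $V(H)$ satisfies $1<|V(H)|<|V(G)|$ and every outside vertex is complete or anticomplete to it, so $V(H)$ is a homogeneous set, contradicting that $G$ is prime. Otherwise $V(H)=V(G)$ and $G=\overline{C_n}$, whence $\alpha(G)=\omega(C_n)=2<3$, contradicting $\alpha(G)\ge 3$. In either case no such $H$ exists, so $\bar G$ is $(C_5,C_7,\ldots)$-free and $G$ is perfect by the Strong Perfect Graph Theorem.

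The main obstacle is the local attachment lemma. The delicate point is that the $\B$ certifying a bad attachment need not have $u$ as its pendant vertex: sometimes $u$ must be placed on the $C_4$ and a vertex of $H$ serves as the pendant, and which construction applies depends on how $A$ meets the arcs of the complement cycle. Organising these possibilities into a clean, uniform case distinction — rather than an ad hoc enumeration over the position and size of $A$ — is where the real effort lies; by comparison, the reduction through the Strong Perfect Graph Theorem and the final appeal to primality and to $\alpha(G)\ge 3$ are routine.
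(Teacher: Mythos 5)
The paper itself offers no proof of this statement---it is quoted directly from Ho\`ang's structure paper on (banner, odd hole)-free graphs---so there is no in-paper argument to compare against; your proposal must stand on its own, and it has a genuine gap at its central step. The reduction through the Strong Perfect Graph Theorem and the concluding appeal to primality and to $\alpha(G)\ge 3$ are fine, but the local attachment lemma---that every vertex $u$ outside an induced $H=\overline{C_n}$ ($n\ge 7$ odd) is complete or anticomplete to $V(H)$, certified by a forbidden induced subgraph inside $V(H)\cup\{u\}$---is false as a purely local statement. Concretely, take $H=\overline{C_7}$ with vertices $v_1,\dots,v_7$ and non-edges exactly the consecutive pairs, and let $u$ be adjacent to every vertex of $H$ except $v_1$. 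Then $G'=G[V(H)\cup\{u\}]$ contains no banner: every induced $C_4$ of $G'$ either lies inside $H$, in which case each vertex of $H$ outside it has at least two neighbours on it (it has only two non-neighbours in $H$) and $u$ has at least three; or it has vertex set $\{u,v_1,x,y\}$ with $x,y$ consecutive, in which case every remaining vertex of $H$ is adjacent to $u$ and to at least one of $v_1,x,y$, hence is never a pendant. Moreover $G'$ contains no $C_5$ (a $C_5$ through $u$ would give $u$ only two neighbours among four vertices of $H$, impossible since $u$ misses only $v_1$, and $\overline{C_7}$ itself has no induced $C_5$) and no longer odd hole (since $\alpha(G')=2$). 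The same computation works for every odd $n\ge 7$. So the case analysis you defer to---``the few remaining attachments instead force an induced $C_5$ or a longer odd hole''---cannot be closed: there are proper, non-empty attachments producing none of the forbidden subgraphs.

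The consequence is that the hypothesis $\alpha(G)\ge 3$ must enter in an essential, non-local way, whereas in your proposal it is used only in the degenerate case $V(G)=V(H)$. The genuinely hard situation is an odd antihole with partial attachments such as the one above, where one must argue globally that the whole graph is forced to keep independence number $2$ (note that $\alpha(G')=2$ in the example, so it does not contradict the theorem---only your lemma's proof). That global propagation is exactly the content of Ho\`ang's structural analysis, and it is substantially more involved than a one-vertex attachment argument; as it stands, your proof establishes the theorem only under the unproved (and locally unprovable) attachment lemma.
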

\begin{theorem}[Karthick, Maffray, and Pastor \cite{KMP}]\label{banner_P_5}
If $G$ is a prime $(P_5,\B)$-free graph of independence number at least $3$, then $G$ is perfect.
\end{theorem}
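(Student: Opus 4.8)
The plan is to derive the statement from Ho\'ang's Theorem~\ref{banner_C_5} by proving that $G$ is in fact $C_5$-free. The crucial (and clean) observation is that $P_5$-freeness already excludes every induced cycle of length at least $6$, since any five consecutive vertices of such a cycle induce a $P_5$. Hence a $(P_5,\B)$-free graph is automatically $(C_7,C_9,\ldots,\B)$-free, and the only odd hole it can possibly contain is $C_5$. Consequently, once we know that a prime $(P_5,\B)$-free graph of independence number at least $3$ has no induced $C_5$, it is $(C_5,C_7,\ldots,\B)$-free, prime, and of independence number at least $3$, so Theorem~\ref{banner_C_5} applies and yields that $G$ is perfect. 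The whole proof therefore reduces to establishing $C_5$-freeness.

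Suppose for contradiction that $G$ contains an induced $C_5$, say $C\colon c_1c_2c_3c_4c_5c_1$ with indices read modulo~$5$. First I would determine the possible traces $N_G(v)\cap V(C)$ of a vertex $v\in V(G)\setminus V(C)$. A short case analysis shows: $P_5$-freeness forbids $|N_G(v)\cap V(C)|\in\{1,2\}$, because a vertex adjacent to one or two vertices of $C$ always yields an induced $P_5$ together with suitable vertices of $C$; and $\B$-freeness forbids a trace consisting of a non-consecutive triple, since such a trace creates an induced $C_4$ with a pendant, i.e.\ a $\B$. What remains is a short list of admissible traces: every vertex outside $C$ is either anticomplete to $C$, complete to exactly three consecutive vertices, complete to exactly four consecutive vertices, or complete to all of $C$. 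I would record the corresponding vertex classes as $D$, $Y_i$, $X_i$, and $F$ (with $i\in\mathbb{Z}_5$ marking the position).

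The contradiction now comes from combining primality with $\alpha(G)\geq 3$. Two local facts feed the argument: a vertex of $D$ adjacent to a vertex of some $Y_i$ yields an induced $P_5$ into $C$, and a vertex of $D$ adjacent to a vertex of some $X_i$ yields a $\B$ (the four-neighbour trace already contains an induced $C_4$ through the missing vertex $c_i$, to which the $D$-vertex attaches as a pendant); hence $D$ is anticomplete to $\bigcup_i(X_i\cup Y_i)$. Moreover, if all of the partial classes $X_i,Y_i$ were empty, then every vertex outside $C$ would be complete or anticomplete to $C$, making $V(C)$ a module; as $G$ is prime this forces $V(G)=V(C)$, whence $\alpha(G)=\alpha(C_5)=2$, contradicting $\alpha(G)\geq 3$. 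Thus some partial class is non-empty, and I would then analyse how an independent triple (guaranteed by $\alpha(G)\geq 3$) must sit relative to $C$ and to these classes, showing in every configuration that it completes, together with suitable vertices of $C$, an induced $P_5$ or $\B$ — the final contradiction.

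The main obstacle is the last step: the partial classes $X_i,Y_i$ can genuinely be non-empty in a prime $(P_5,\B)$-free graph containing $C_5$ (for example, $C_5$ together with one extra vertex adjacent to four consecutive vertices is prime and $(P_5,\B)$-free), but such graphs all have independence number $2$. Ruling out independence number at least $3$ therefore cannot be done by primality alone; it requires a careful case analysis of how vertices in the various classes $D,X_i,Y_i,F$ see one another, tracking which pairs can be non-adjacent and verifying that no three pairwise non-adjacent vertices survive without producing a forbidden $P_5$ or $\B$. Organising this casework is where the real work of the proof lies, whereas the classification of traces and the reduction to Theorem~\ref{banner_C_5} are comparatively routine.
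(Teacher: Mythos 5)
There is a genuine gap, and you name it yourself. Your reduction is sound in outline: since any induced $C_k$ with $k\geq 6$ contains an induced $P_5$, a $(P_5,\B)$-free graph is automatically $(C_7,C_9,\ldots)$-free, so once $C_5$-freeness is established the statement follows from Theorem~\ref{banner_C_5}. Your classification of the traces $N_G(v)\cap V(C)$ is also essentially correct (the standard allowed traces are $\emptyset$, three consecutive vertices, four consecutive vertices, or all of $V(C)$), and the two local facts about $D$ being anticomplete to $\bigcup_i(X_i\cup Y_i)$ check out. But the entire content of the theorem is concentrated in the step you defer: showing that when some partial class $X_i$ or $Y_i$ is non-empty, primality and $\alpha(G)\geq 3$ cannot coexist with the $C_5$. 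That casework is not routine bookkeeping --- it must track adjacencies among $F$, the five classes $X_i$, the five classes $Y_i$, the components of $D$, and independent triples that may use vertices of $C$ itself --- and as written you give no argument for any of these configurations, only the assertion that each "completes an induced $P_5$ or $\B$". A proof that stops at "one then verifies all cases" is a proof sketch, not a proof; note also that the claim you are trying to establish ($C_5$-freeness under the stated hypotheses) is, modulo Ho\'ang's theorem, equivalent to the theorem itself, so nothing short of the full analysis will do.

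For context: the paper does not prove this statement at all; it is imported verbatim from Karthick, Maffray, and Pastor \cite{KMP}, so there is no internal proof to compare against. Judged on its own, your attempt is a reasonable plan whose decisive step is missing.
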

As an immediate consequence $G[M_i]$ is $3K_1$-free or, by Lemma~\ref{preliminary_lemma_lovasz}, $G[M_i]$ is perfect. 
In the latter case, $G[M_i]$ is complete since $G$ is critical. Thus, in both cases, $G[M_i]$ is $3K_1$-free.
Since $M_i$ is complete to $M_j$ in $G$ for each distinct $i,j\in[k]$, $G$ is $3K_1$-free as well.
Hence, our proof for Theorem~\ref{thm_critical_graphs}~(i) and (iii) is complete.

\medskip

We continue and prove Theorem~\ref{thm_main} (i) and (iii). We note that each graph of $\{C_7,C_9,\ldots,P_5,\B\}$ contains at least one induced copy of $3K_1$.
Consequently, for each $\omega\geq 1$, we have
$$f^\star_{\{C_5,C_7,\ldots,\B\}}(\omega)\geq f^\star_{\{C_5,3K_1\}}(\omega)\quad\text{and}\quad f^\star_{\{P_5,\B\}}(\omega)\geq \f(\omega).$$
Let $G$ be a $\B$-free graph that is $(C_5,C_7,\ldots)$-free or $P_5$-free and $G'$ be a critical induced subgraph of $G$ such that $\chi(G')=\chi(G)$. 
We find that $G'$ is $3K_1$-free by Theorem~\ref{thm_critical_graphs} (i) and (iii).
Furthermore, by Lemma~\ref{preliminary_lemma_superadditiv}, $f^\star_{\{C_5,3K_1\}}$ and $f^\star_{\{3K_1\}}$ are superadditive.
It follows from $\omega(G')\leq\omega(G)$ that
$$\chi(G)=\chi(G')\leq f^\star_{\{C_5,3K_1\}}(\omega(G'))\leq f^\star_{\{C_5,3K_1\}}(\omega(G))$$
if $G$ is $(C_5,C_7,\ldots)$-free
and
$$\chi(G)=\chi(G')\leq f^\star_{\{3K_1\}}(\omega(G'))\leq f^\star_{\{3K_1\}}(\omega(G))$$
if $G$ is $P_5$-free.
Hence, our proof for Theorem~\ref{thm_main} (i) and (iii) is complete.

\section{$C_4$-free graphs}

This section is devoted to a proof of Theorem~\ref{thm_main} (iv) and Theorem~\ref{thm_critical_graphs} (iv).

\medskip

In order to prove Theorem~\ref{thm_critical_graphs} (iv), we may assume that $G$ is a critical $(P_5,C_4)$-free graph.
By Corollary~\ref{preliminary_cor_critrical}, there exist an integer $k\in\mathbb{N}_{>0}$ such that $V(G)$ can be partitioned into $k$ sets $M_1,M_2,\ldots, M_k\subseteq V(G)$ such that 
$M_i$ is complete to $M_j$ in $G$ for distinct $i,j\in[k]$, and $G[M_i]$ is a strong expansion of a prime graph $G_i$ without clique-separator of modules for each $i\in[k]$.
Let us assume $\alpha(G[M_1])\geq \alpha(G[M_i])$ for each $i\in[k]$. 
If $\alpha(G[M_1])=1$, then $G=G[M_1\cup M_2\cup\ldots\cup M_k]$ is complete, and the desired result follows.
In what follows we assume $\alpha(G[M_1])\geq 2$.
Since $G$ is $C_4$-free, we have that $V(G)\setminus M_1$ is a clique in $G$, and so $G-M_1$ is complete and a strong expansion of $G[\{u\}]$ for some $u\in V(G)\setminus M_1$.
We note that $G[M_1]$ is critical. From the fact $\alpha(G[M_1])\geq 2$, it follows $\chi(G[M_1])>\omega(G[M_1])$. Thus, $G[M_1]$ is not perfect.
Additionally, $G_1$ is isomorphic to an induced subgraph of $G[M_1]$, and so it is a prime $(P_5,C_4,\B)$-free graph. In particular, $G_1$ is $3K_1$-free by Theorem~\ref{banner_P_5}. Hence, $\bar{G}_1$ is non-bipartite by the Strong Perfect Graph Theorem and $(2K_2,C_3)$-free. 
Randerath's~\cite{R} characterisation of non-bipartite $(P_5,C_3)$-free graphs implies that the prime ones are copies of $C_5$, and so $G_1\cong C_5$. 
Thus, there is a vertex-weight function $q'\colon V(G')\to\mathbb{N}_{>0}$ such that $G$ is a $q'$-expansion of $G'\in\{C_5,W_5\}$.
Therefore, Theorem~\ref{thm_critical_graphs}~(iv) follows.

\medskip

We continue and prove Theorem~\ref{thm_main}~(iv). Note that every $q$-expansion of $C_5$ for a vertex-weight function $q\colon V(C_5)\to\mathbb{N}_0$ is $(P_5,C_4)$-free. By Observation~\ref{preliminary_obs_q_expansion} and Corollary~\ref{preliminary_cor_C5}, we have
$$f_{\{P_5,C_4\}}^\star(\omega)\geq 
\left\lceil\frac{5\omega-1}{4}\right\rceil
$$
for each $\omega\in\mathbb{N}_{>0}$.
Hence, it suffices to prove $\chi(G)\leq \lceil(5\omega(G)-1)/4\rceil$ for each $(P_5,C_4)$-free graph.
Let $G'$ be a critical induced subgraph of $G$ such that $\chi(G')=\chi(G)$.
If $G'$ is complete, then 
$$\chi(G)=\chi(G')=\omega(G')\leq \omega(G)\leq \left\lceil\frac{5\omega(G)-1}{4}\right\rceil.$$
Hence, we may assume that $G'$ is not complete.
By Theorem~\ref{thm_critical_graphs}~(iv), there is a vertex-weight function $q'\colon V(W_5)\to\mathbb{N}_0$ such that $G'$ is a $q'$-expansion of $W_5$.
In particular, $q'(W_5-w)>0$ for the vertex $w$ of maximum degree in~$W_5$ as otherwise $q'(w)>0$ and $G'$ is a complete graph. Furthermore, $w$ is complete to $V(W_5)\setminus\{w\}$ in $G$.
Thus,
\begin{align*}
\chi(G)&=\chi(G')=\chi_{q'}(W_5)=\chi_{q'}(W_5[\{w\}])+\chi_{q'}(W_5-w)\leq q'(w)+\left\lceil\frac{5\omega_{q'}(W_5-w)-1}{4}\right\rceil \\
& \leq \left\lceil\frac{5(\omega_{q'}(W_5-w)+q'(w))-1}{4}\right\rceil=\left\lceil\frac{5\omega_{q'}(W_5)-1}{4}\right\rceil=\left\lceil\frac{5\omega(G')-1}{4}\right\rceil \leq \left\lceil\frac{5\omega(G)-1}{4}\right\rceil
\end{align*}
by Observation~\ref{preliminary_obs_q_expansion} and Corollary~\ref{preliminary_cor_C5},
which completes our proof for Theorem~\ref{thm_main} (iv).

\subsubsection*{Acknowledgement}
We thank both reviewers for their helpful comments which greatly improved our paper.

\end{document}